\newtheorem{lemma}{Lemma}[section]
\newtheorem{theorem}{Theorem}
\newtheorem{corollary}[lemma]{Corollary}
\newtheorem{definition}[lemma]{Definition}
\newtheorem{remark}[lemma]{Remark}
\let\oldmarginpar\marginpar
\renewcommand\marginpar[1]{\-\oldmarginpar[\raggedright\footnotesize #1]%
{\raggedright\footnotesize #1}}
\begin{document}

\title{Exhaustive Gromov compactness for pseudoholomorphic curves}
\dedicatory{Dedicated to the Memory of Jean-Cristophe Yoccoz}

\author[J.W.~Fish]{Joel W.~Fish}
\thanks{This research was supported in part by NSF-DMS Standard Research Grant, Award 1610452 }

\author[H.~Hofer]{Helmut Hofer}

\address{
	Joel W.~Fish\\
	Mathematics Department\\
        University of Massachusetts Boston
	}

\email{joel.fish@umb.edu}

\address{
	Helmut Hofer\\
        School of Mathematics
	Institute for Advanced Study
	}

\email{hofer@math.ias.edu}

\keywords{pseudoholomorphic, compactness} 
\date{\today}
\maketitle

\begin{abstract}
Here we extend the notion of target-local Gromov convergence of
  pseudoholomorphic curves to the case in which the target manifold is not
  compact, but rather is exhausted by compact neighborhoods.
Under the assumption that the curves in question have uniformly bounded
  area and genus on each of the compact regions (but not necessarily
  global bounds), we prove a subsequence converges in an exhaustive
  Gromov sense.
\end{abstract}

\tableofcontents

\allowdisplaybreaks

\newcounter{CurrentSection}
\newcounter{CurrentTheorem}
\newcounter{CounterSectionEGC}
\newcounter{CounterTheoremEGC}

%

\section{Introduction}\label{SEC_introduction}
In his celebrated 1985 paper, \cite{Gr}, Gromov introduced the notion of a
  pseudoholomorphic curve, and provided an accompanying compactness theorem.
His idea was to generalize the notion of an algebraic curve in, say, a
  complex projective variety to that of a pseudoholomorphic curve in
  a symplectically tamed almost complex manifold, and he showed that families
  of such curves are analogously compact.
In the decades since, pseudoholomorphic curves have played a fundamental
  role in the development of symplectic geometry and topology as well as
  Hamiltonian dynamics, and a variety generalizing compactness theorems have
  been established.
These tend to proceed along two general paths.  

The first approach is exemplified by Rugang Ye \cite{Ye}, Floer
  \cite{Floer}, Hofer \cite{H93}, and the SFT compactness paper
  \cite{BEHWZ}, in that each of these treat closed or punctured curves
  from a global perspective.
Besides additional ingredients dealing with the analysis near punctures
  and the necks, see for example \cite{HWZ02},  the analysis proceeds rather
  analogously to that for families of harmonic maps, which we outline as
  follows.
  \begin{enumerate}                                                       
    \item Obtain convergence of underlying Riemann surfaces.
    \item With respect to a constant curvature metric guaranteed by the
      Uniformization Theorem, show that gradient bounds imply
      \(\mathcal{C}^\infty\) bounds.
    \item Employ bubbling analysis at points of gradient blow-up, and show
      that only finitely many bubbles appear due to energy bounds and an
      energy threshold.
    \item Use \(\mathcal{C}^\infty\) bounds and Arzel\`{a}-Ascoli to pass
      to a further subsequence which converges in \(\mathcal{C}^\infty\).
    \item Verify that bubbles connect via, say, a monotonicity lemma.
    \end{enumerate}
This approach is most applicable when one has genus bounds, energy bounds,
  some global control over the entirety of each curve in the family, and
  when one already has a good idea of what types of curves should arise in
  the limits of such families.
For completeness, we also mention Hummel \cite{Hummel}, and for a more
  classical viewpoint, \cite{Audin}.

The second approach, typified by Taubes via Proposition 3.3 in
  \cite{Taubes}, treats curves as sets and integral currents, and proves
  compactness from a more measure theoretic perspective.
Roughly speaking, area bounds, a monotonicity lemma, and some measure
  theory yield a compactness theorem, however some additional work is
  necessary to show the limit is rectifiable, or rather that the measure
  theoretic limit has the structure of a weighted union of images of
  pseudoholomorphic curves.
This approach is quite natural from the perspective of Seiberg-Witten
  theory, particularly when employing a Taubes-like degeneration to obtain
  pseudoholomorphic curves.
The result has also been used extensively in Embedded Contact Homology,
  introduced by Hutchings in \cite{Hut1}; see also \cite{HS} and
  \cite{Hut2}.
More generally, the technique is applicable when one has little more than
  area bounds -- indeed, one does not need genus bounds on the sequence of
  curves.
However, this can also be a weakness, in that genus cannot be detected a
  priori by these techniques.  For example, one can construct degree-two
holomorphic branched coverings
  of the unit disk with arbitrarily large genus, but from the integral
  current perspective, all such objects are indistinguishable.

The purpose of this manuscript is to further develop a less used third
  approach, introduced by the first author in \cite{F0} and streamlined in
  \cite{F1} and \cite{F2}.
This is the so-called target-local Gromov compactness result; for a
  restatement, see Theorem \ref{THM_target_local} below.
The basic idea was to follow the Taubes approach to studying curves
  locally in the target and allowing a free boundary (including arbitrarily
  many boundary components), but also demanding a genus bound, and then
  extracting a subsequence which converges in the Gromov-topology, rather
  than the substantially weaker topology of integral-currents.
In some sense, the target-local compactness theorem says that if \(W\) is
  a smooth compact manifold with boundary, and \(u_k:(S_k,\partial S_k)\to
  (W, \partial W)\)  is a sequence of pseudoholomorphic maps with genus
  bounds and area bounds, then after trimming the curves near \(\partial W\)
  a subsequence converges in a Gromov sense.
Our main result, stated below as Theorem
  \ref{THM_exhaustive_gromov_compactness}, extends this to the case that
  \(W\) is no longer compact, but instead is exhausted by compact manifolds
  with smooth boundary:
  \begin{align*}                                                          
    W_1 \subset W_2 \subset W_3 \subset \cdots \bigcup_{\ell\in\mathbb{N}}
    W_\ell = W.
    \end{align*}
Here the key assumptions on the curves are that 
  \begin{align*}                                                          
    {\rm Area}(u_k^{-1}(W_\ell)) \leq C_\ell \qquad\text{and}\qquad{\rm
    Genus}(u_k^{-1}(W_\ell)) \leq C_\ell .
    \end{align*}
In other words, this means that the curves in question may have infinite
  area and genus, however on each compact \(W_\ell\) (the
  union of which exhaust \(W\)) one has area and genus bounds for
  the portions of curves in that region.

It is important to mention that our main result here, stated as Theorem
  \ref{THM_exhaustive_gromov_compactness} below, is not a needless
  extension of Theorem \ref{THM_target_local}, proved in \cite{F2}, but
  rather it plays a foundational role in two forthcoming papers.
The first was announced in \cite{FH0} and will appear in \cite{FH} in
  which we prove that no regular energy level of a
  proper Hamiltonian function on \((\mathbb{R}^4, \omega_{\rm std})\) has
  a minimal Hamiltonian flow, which answers a question for the case \(n=2\)
  raised by Herman in his 1998 ICM address; see \cite{Herman}.
The idea is to use neck-stretching techniques to study pseudoholomorphic
  curves in the symplectization of framed Hamiltonian manifolds.
The tremendous difficulty is that such curves will lack a priori energy
  bounds like those that appear in Symplectic Field Theory, and thus the
  global techniques employed in \cite{BEHWZ} fail quickly and completely.
Moreover, the Taubes approach of \cite{Taubes} also fails, precisely
  because the topology used to obtain compactness is simply too coarse.
Indeed, the genus bounds and curvature properties that follow from the
  Gromov topology (but not the integral current topology) which are
  guaranteed by Theorem \ref{THM_exhaustive_gromov_compactness} play
  crucial roles in the proofs of the main results in \cite{FH}.
In essence, our main result here strikes
  the perfect balance between the flexibility of the integral-current
  approach with the strength of the Gromov topology, and this balance is
  then heavily exploited in \cite{FH} to first find a limit curve (which
  might be wildly complicated), and then to use the Gromov topology and a
  posteriori analysis on the limit curve to show that it has a surprising
  number of unexpected properties, which are necessary to establish the
  non-minimality of the hypersurfaces.

The second result relying on Theorem
  \ref{THM_exhaustive_gromov_compactness} is the so called sideways
  stretching compactness results developed by the first author; see
  \cite{F3}.
Here the idea is that Symplectic Field Theory is something akin to a TQFT
  for symplectic manifolds, and an extended TQFT would be akin to an
  extended Symplectic Field Theory in which one could independently stretch
  the neck along two transverse contact hypersurfaces.
This has been carried out by the first author in certain sub-critical
  cases, and will appear in a forthcoming paper.
Again though, the idea is similar: use a sequence of expanding domains in
  the target manifold, on which one has successively increasing area bounds
  to obtain a preliminary compactness result from Theorem
  \ref{THM_exhaustive_gromov_compactness}; then use a posteriori analysis
  and the Gromov topology to improve properties of both the limit and
  precision of the convergence; then iterate this procedure to develop a
  full extended Symplectic Field Theory style compactness theorem.

More generally still, it is not difficult to imagine a wide range of
  applications of Theorem \ref{THM_exhaustive_gromov_compactness}.
Indeed, consider any symplectic manifold \(W\), and any compact set
  \(K\subset W\) which has empty interior.
Then consider any sequence of almost complex structures which are tame,
  but degenerate along \(K\).
That is, the \(J_k\) converge in \(\mathcal{C}_{loc}^\infty(W\setminus
  K)\) to an almost complex structure on \(W\setminus K\), which is
  uniformly tame on each \(W\setminus \mathcal{O}(K)\), but not uniformly
  tame on \(W\setminus K\).
Then consider a sequence of closed pseudoholomorphic curves in a fixed
  homology class in \(W\) which have bounded genus (e.g. only spheres).
Theorem \ref{THM_exhaustive_gromov_compactness} immediately guarantees
  that a subsequence converges, in an exhaustive Gromov sense (see
  Definition \ref{DEF_exhaustive_gromov_convergence}), to a
  pseudoholomorphic curve in \(W\setminus K\).
Such a curve may have wildly complicated behavior -- and yet a posteriori
  analysis can be employed which exploits the particular features of the
  \(K\) and \(J_k\) in question.
Considering the ubiquitous use of pseudoholomorphic curves in symplectic
  geometry, topology, and Hamiltonian dynamics, such a result would seem
  potentially quite useful.

\subsection{Acknowledgements}
The authors would like to thank the referee for helpful comments and
  suggestions, particularly in regards to clarifying the exposition and
  helping to remove certain unnecessary assumptions.

%
\section{Preliminaries}
  \label{SEC_preliminaries}

This section is devoted to presenting some preliminary concepts and small
  supporting results.
In Section \ref{SEC_direct_limit_manifolds} we introduce the notion of an
  embedding diagram and direct limit manifolds.
This is meant to generalize the notion of an exhausting sequence of
  regions:
  \begin{align*}                                                          
    W_1 \subset W_2 \subset W_3 \subset \cdots \subset  \bigcup_{k\in
    \mathbb{N}} W_k =: \overline{W},
    \end{align*}
  and is necessary for constructing the domain of the limit curve we must
  later produce.
In Section \ref{SEC_riemann_surfaces} we review the basic definitions of
  Riemann surfaces, as well as additional structures like marked points,
  nodal points, decorations, arithmetic genus, etc.
In Section \ref{SEC_pseudoholomorphic}, we recall the definition of
  pseudoholomorphic curves and some related concepts, like stability,
  boundary-immersed maps, generally immersed maps, and area.
Finally, in Section \ref{SEC_convergence_of_pseudoholomorphic_curves} we
  provide a number of definitions of convergence for pseudoholomorphic
  curves.
We note that the key notion of Section \ref{SEC_preliminaries} is given in
  Definition \ref{DEF_exhaustive_gromov_convergence}, which is the novel
  definition of convergence of pseudoholomorphic curves in an exhaustive
  Gromov sense.
Finally we note that here and throughout, in the case that a domain is
  non-compact, \(\mathcal{C}^\infty\) convergence will mean
  \(\mathcal{C}_{loc}^\infty\) convergence.

%
\subsection{Direct limit manifolds}
  \label{SEC_direct_limit_manifolds}
This section will be devoted to establishing the notion of a direct  
  limit manifold, as well as some of its basic properties.
We also take the opportunity to introduce some of the notions we shall
  frequently use.

Here and throughout, a topological manifold \(X\) of dimension \(n\)  is a
  second countable Hausdorff space, where every point \(x\in X\) has an open
  neighborhood homeomorphic to an open subset in \({\mathbb R}^n\).  
A chart around \(x\in X\) is a triple \(({\mathcal O},\phi,{\mathcal V})\)
  where ${\mathcal V}$ is an open subset
  of ${\mathbb R}^n$, ${\mathcal O}$ is an open neighborhood of \(x\), and
  $\phi:{\mathcal O}\rightarrow {\mathcal V}$ is a homeomorphism.
An atlas for the topological manifold \(X\) consists of a family of charts
  ${({\mathcal O}_i,\phi_i,{\mathcal V}_i)}_{i\in I}$ such that
  $X=\cup_{i\in I} {\mathcal O}_i$. We call the collection a smooth atlas
  provided all transition maps are  $C^\infty$.

In the category \(\text{Man}^n\) of \(n\)-dimensional smooth manifolds we
  will have to consider, during later constructions, direct limits
  of so-called \emph{embedding diagrams}
  \begin{align*}                                                          
    (\mathbf{M},{\bm \psi}):\
    M_1\xrightarrow{\psi_1}M_2\xrightarrow{\psi_2} M_3\rightarrow\cdots,
    \end{align*}
  where all arrows are smooth embeddings between smooth manifolds. 
From this we obtain a directed system by defining 
  \begin{align*}                                                          
    &\psi^k_j:M_j\rightarrow M_k
    \\
    &\psi^k_j=\psi_{k-1}\circ\cdots\circ \psi_j.
    \end{align*}
The direct limit of such a diagram is by definition a tuple
  \((\overline{M},\{\iota_k\}_{k\in {\mathbb N}})\), where
  \(\overline{M}\) is a smooth \(n\)-dimensional manifold and for every
  \(k\in {\mathbb N}\) the map \(\iota_k:M_k\rightarrow \overline{M}\) is
  smooth so that we  have the following universal property.
\begin{itemize}
  \item Given smooth maps \(f_k:M_k\rightarrow X\) such that
    \(f_{k+1}\circ \psi_k=f_k\) there exists a uniquely determined smooth
    map \(f:\overline{M}\rightarrow X\) such that \(f\circ \iota_k=f_k\).
  \end{itemize}
An immediate consequence of the universal property is that
  \((\overline{M},\{\iota_k\}_{k\in {\mathbb N}})\) is unique up to natural
  diffeomorphism.
It is also an easy exercise to show that the \(\iota_k\) have to be smooth
  embeddings.
This means any other realization of the direct limit
  \((\overline{M}',\{\iota_k'\}_{k\in {\mathbb N}})\)
  is related to the first by a uniquely determined diffeomorphism
  \(\phi:\overline{M}\rightarrow \overline{M}'\) satisfying \(\phi\circ
  \iota_k=\iota_k'\).
The construction of the direct limit is well-known, but we briefly recall
  that it can be defined as
  \begin{align*}                                                          
    \overline{M}:=\Big( \bigcup_{k\in \mathbb{N}} M_k\times \{k\}\Big)
    /\sim
    \end{align*}
  where two points are equivalent if there exists \(\psi_j^k\) mapping one
  to the other.
Here are some standard properties of the direct limit.
\begin{enumerate}[(DL-1)]                                                 
  \item 
  The smooth maps $f:\overline{M}\rightarrow X$ are in one-to-one
  correspondence to families of smooth maps $\{f_k\}_{k\in {\mathbb N}}$,
  where $f_k:M_k\rightarrow X$ such that $f_{k+1}\circ\psi_k=f_k$.
  \item 
  Each tensor \(\overline{T}\) on \(\overline{M}\) uniquely corresponds to
  a family of tensors \(\{T_k\}_{k\in {\mathbb N}}\)  on the
  \(\{M_k\}_{k\in \mathbb{N}}\) such that \(T_k\) is
  a tensor on \(M_k\) and \(\psi_k^\ast T_{k+1}=T_k\), and each such
  family gives rise to a unique tensor on \(\overline{M}\).
  \end{enumerate}

Given an embedding diagram \((\mathbf{V}, {\bm \phi})\), an inclusion,
  denoted \(\mathbf{\Gamma}:(\mathbf{M},{\bm
  \psi})\rightarrow (\mathbf{V},{\bm \phi})\), consists of a family of
  smooth maps \(a_k:M_k\rightarrow V_k\) such that the following diagrams
  are commutative:
  \begin{align*}                                                          
    \begin{CD}
      M_k @> \psi_k>>  M_{k+1}\\
      @V a_k VV @V a_{k+1} VV\\
      V_k@>\phi_k >>   V_{k+1}
        \end{CD}
      \end{align*}
It follows immediately that the \(a_k\) must be embeddings, and hence we obtain an
  embedding between the direct limits.
Let us call \(\mathbf{\Gamma}\) \emph{surjective} provided that for every
  \(v_k\in V_k\) there exists \(\ell\geq k\) and \(m_\ell\in M_\ell\) such
  that \(a_\ell(m_\ell)=\phi^\ell_k(v_k)\).
In this case the induced maps between the limits are diffeomorphisms.

As an example, consider an embedding diagram \((\mathbf{M}, {\bm \psi})\)
  and a strictly monotonic map \(\sigma:{\mathbb N}\rightarrow {\mathbb
  N}\).
Construct another embedding diagram \((\mathbf{V}, {\bm \phi})\), where
  \(V_k := M_{\sigma(k)}\) and \(\phi_k:V_k \to V_{k+1}\) is given by
  \(\phi_k =\psi_{\sigma(k)}^{\sigma(k+1)}\).
On one hand, we might regard \((\mathbf{V}, {\bm \phi})\) as a
  subsequence or subsystem of \((\mathbf{M}, {\bm \psi})\), however we will
  instead regard \((\mathbf{M}, {\bm \psi})\) as a surjective inclusion of
  \((\mathbf{V}, {\bm \phi})\) with the family of maps \(a_k:M_k \to V_k\)
  given by  \(a_k= \psi_k^{\sigma(k)}\).
Then, because this diagram commutes:
  \begin{align*}                                                            
    \begin{CD}
      M_1 @>\psi_1>>  M_2 @>\psi_2>>  M_3@>\psi_3>> \cdots\\
      @V \psi_1^{\sigma(1)} VV  @V \psi_2^{\sigma(2)}  VV   @V \psi_3^{\sigma(3)} VV\\
      M_{\sigma(1)}  @>\psi_{\sigma(1)}^{\sigma(2)} >>  M_{\sigma(2)}
      @>\psi_{\sigma(2)}^{\sigma(3)} >> M_{\sigma(3)}
      @>\psi_{\sigma(3)}^{\sigma(4)}>>\cdots
      \end{CD}
    \end{align*}
it trivially follows that the following diagram commutes:
  \begin{align*}                                                            
    \begin{CD}
      M_1 @>\psi_1>>  M_2 @>\psi_2>>  M_3@>\psi_3>> \cdots\\
      @V a_1 VV  @V a_2 VV   @V a_3 VV\\
      V_{1}  @>\phi_{1} >>  V_{2}
      @>\phi_{2} >> V_{3}
      @>\phi_{3}>>\cdots
      \end{CD}
    \end{align*}
From this it immediately follows that if \((\overline{M},
  \{\iota_k\}_{k\in \mathbb{N}})\) is the direct limit of \((\mathbf{M}, {\bm
  \psi})\), then \((\overline{M}, \{\iota_{\sigma(k)}\}_{k\in \mathbb{N}})\)
  is the direct limit of \((\mathbf{V}, {\bm \phi})\), and the limit
  manifolds are diffeomorphic via the identity map.
Or, put another way, regarding \((\mathbf{V}, {\bm \phi})\) as a
  subsequence of \((\mathbf{M}, {\bm \psi})\), we see that each embedding
  diagram has the same direct limit.

We note that the above claims can be proved by reducing them to the
  following trivial result, the proof of which is left to the reader.

\begin{lemma}[exhausting subsets and direct limit manifolds]\hfill \\
  \label{LEM_exhausting_subsets}
Let \(\{M_k\}_{k\in \mathbb{N}}\) be a sequence of sets such that 
  \begin{enumerate}                                                       
    \item 
    Each \(M_k\) carries the structure of a smooth manifold.
    \item 
    As sets,  each \(M_k\) is a subset of \(M_{k+1}\), and  \(M_k\subset
    M_{k+1}\) is an open subset in the \(M_{k+1}\) topology.
    \item 
    The smooth structure on \(M_k\) equals the smooth structure induced
    from \(M_{k+1}\).
    \end{enumerate}
Then the union,  \(\overline{M}:=\cup_{k=1}^{\infty} M_k\) carries a natural
  second countable Hausdorff topology and a uniquely determined smooth
  manifold structure, such that the natural inclusion \(M_k\rightarrow
  \overline{M}\) for every \(k\in {\mathbb N}\) is a smooth embedding as
  an open subset.
\end{lemma}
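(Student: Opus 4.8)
The plan is to construct $\overline{M}$ by hand as the set-theoretic union, equip it with the unique topology forcing each $M_k$ to be open, and then glue the given smooth atlases. First I would declare a subset $U\subseteq\overline{M}$ to be \emph{open} precisely when $U\cap M_k$ is open in $M_k$ for every $k\in\mathbb{N}$; that this collection is closed under arbitrary unions and finite intersections and contains $\emptyset,\overline{M}$ is immediate, so it is a topology. The elementary fact I will invoke repeatedly is transitivity of openness for subspaces: by hypothesis (2) each $M_k$ is open in $M_{k+1}$, hence $M_k$ is open in $M_j$ for all $j\ge k$, and likewise any subset open in $M_k$ is open in $M_j$ for $j\ge k$. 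Using this I would check that each $M_k$ is open in $\overline{M}$ (since $M_k\cap M_j$ equals $M_k$ for $j\ge k$ and equals $M_j$ for $j<k$, both open in $M_j$), and that the subspace topology $\overline{M}$ induces on $M_k$ coincides with the original topology of $M_k$; the nontrivial inclusion reduces, via hypothesis (3) and transitivity, to observing that a set open in $M_k$ meets every $M_j$ in a set open in $M_j$.

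Next I would verify the two point-set conditions. Hausdorffness is easy: two distinct points $x,y$ both lie in a common $M_k$ (the $M_k$ are nested and cover $\overline{M}$), and disjoint separating neighborhoods produced in the Hausdorff space $M_k$ remain open in $\overline{M}$ because $M_k$ is open with the correct subspace topology. For second countability I would fix a countable basis $\mathcal{B}_k$ of each $M_k$; every member is open in $\overline{M}$, the union $\bigcup_k\mathcal{B}_k$ is countable, and given an open set $U\ni x$ in $\overline{M}$ I choose $k$ with $x\in M_k$ and find a basis element of $\mathcal{B}_k$ caught between $x$ and $U\cap M_k$. Thus $\bigcup_k\mathcal{B}_k$ is a countable basis.

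For the smooth structure I would take the atlas on $\overline{M}$ to be the union of the given atlases of all the $M_k$; every such chart is a homeomorphism onto an open subset of $\mathbb{R}^n$ because each $M_k$ is open in $\overline{M}$ with its own topology. Smooth compatibility of two charts, coming from $M_k$ and $M_\ell$ respectively, is checked inside $M_{\max(k,\ell)}$: by hypothesis (3) the charts of $M_k$ and of $M_\ell$ are smoothly compatible with the structure of $M_{\max(k,\ell)}$, so their transition map is $C^\infty$ on the overlap. This yields a smooth $n$-manifold structure for which each inclusion $M_k\hookrightarrow\overline{M}$ is, by construction, a smooth open embedding.

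Finally, uniqueness. Any smooth structure making all inclusions open smooth embeddings must induce exactly the topology defined above (the $M_k$ are forced open with their own subspace topology, and they cover), so the underlying topology is determined. Given two such smooth structures $\mathcal{A},\mathcal{A}'$, I would show the identity $(\overline{M},\mathcal{A})\to(\overline{M},\mathcal{A}')$ is a diffeomorphism by checking smoothness locally: near any $x\in M_k$ it factors through the open embedding $M_k\hookrightarrow(\overline{M},\mathcal{A})$ and the smooth inclusion $M_k\hookrightarrow(\overline{M},\mathcal{A}')$, and symmetrically for the inverse. I do not anticipate a serious obstacle here, as the statement is essentially the construction of a countable colimit along open embeddings; the only points demanding genuine care are the repeated appeals to transitivity of openness and the verification that $\bigcup_k\mathcal{B}_k$ really is a basis, since a countable direct limit along open embeddings is exactly the situation in which second countability is preserved.
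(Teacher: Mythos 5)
Your proof is correct and complete: the final (colimit) topology in which $U$ is open precisely when every $U\cap M_k$ is open in $M_k$, the verification via transitivity of open inclusions that each $M_k$ sits inside $\overline{M}$ as an open subspace with its original topology, the union of countable bases and of atlases, and the local factorization argument for uniqueness are all sound. Note that the paper itself supplies no proof of this lemma --- it is explicitly left to the reader as a trivial result --- and your argument is exactly the standard construction that the paper's surrounding discussion of direct limits presupposes, so there is nothing further to compare.
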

%

We conclude Section \ref{SEC_direct_limit_manifolds} by noting that in
  later sections we make use of direct limit manifolds in two ways: As
  targets for our pseudoholomorphic curves, and as domains of our curves.
In the target case, it is sufficient for our purposes to use the
  exhausting subset perspective, and here we will use the notation \(W_k
  \subset W_{k+1}\), etc.
However, in the domain case we will definitely rely on the language and
  results of embedding diagrams and direct limit manifolds.
Roughly speaking, we will employ a version of Gromov compactness with free
  boundary which, as a limit, yields a pseudoholomorphic curve with domain
  which is a compact manifold  with boundary.
By iterating this procedure we get a sequence of domains \(S_k\) which
  are larger and larger in the sense that there exists holomorphic
  embeddings \(\psi_k:S_k\to S_{k+1}\).
Thus in order to construct the desired limit curve, we need the sequence
  \((S_k, \psi_k)\) to give rise to an embedding diagram, and hence a direct
  limit manifold.

%
\subsection{Riemann surfaces}
  \label{SEC_riemann_surfaces} 
Here we aim to define a decorated marked nodal Riemann surface with
  boundary, as well as genus and arithmetic genus.
All of these notions will be utilized in later sections.

\begin{definition}[compact region]\hfill \\
  \label{DEF_compact_region}
Let \(W\) be a manifold.  
Suppose \(\mathcal{U}\subset W\) is an open set for which its closure
  \({\rm cl}(\mathcal{U})\) inherits from \(W\) the structure of a
  smooth compact manifold possibly with boundary.
Then we call \({\rm cl}(\mathcal{U})\) a \emph{compact region} in \(W\).
\end{definition}
%

\begin{definition}[almost complex structures]\hfill \\
  \label{DEF_almost_complex}
Let \(W\) be a smooth finite dimensional manifold equipped with a smooth
  section of the endomorphism bundle  \(J\in \Gamma({\rm End}(TW))\) over
  \(W\) for which \(J\circ J =- {\rm Id}\).
We call \(J\) an almost complex structure, and we call \((W, J)\) an
  almost complex manifold.
\end{definition}
%

\begin{definition}[nodal Riemann surface]\hfill\\
  \label{DEF_nodal_riemann_surface}
A \emph{nodal Riemann surface} is a triple \((S, j, D)\), where \(S\) is a
  real two-dimensional manifold, possibly with boundary, equipped with a
  smooth almost complex structure \(j\).
Furthermore, \(D\subset S\setminus \partial S\), is an unordered discrete
  closed set of pairs \(D= \{\overline{d}_1,\underline{d}_1,
  \overline{d}_2,\underline{d}_2,\ldots \} \) which we call nodal points,
  and the pairs \((\underline{d}_i, \overline{d}_i)\) we call nodal pairs.
A \emph{marked nodal Riemann surface} is the four-tuple \((S, j, \mu, D)\)
  where \((S, j, D)\) is a nodal Riemann surface, and where \(\mu\subset
  S\setminus (D\cup \partial S) \) is a discrete closed set of points.
\end{definition}
%

It is worth noting that we allow for the possibility that either of
  \(\mu\) or \(D\) may be empty, finite, or infinite.
That we allow either to be infinite is both novel and necessary for our
  applications, however by requiring that each set is both closed and
  discrete, we avoid the complicated issues arising from the existence of
  accumulations points of marked/nodal points.
Additionally, we make the requirement that the set of special points
  \(\mu\cup D\) is disjoint from the boundary.
This is a non-standard condition, but relevant for our approach because
  our curves have \emph{free} boundary; that is, no boundary condition
  is specified, and it will by natural to remove a small neighborhood of the
  boundary.
As such, we preemptively guarantee that in doing so, we do not remove any
  special points.

\begin{definition}[Genus]\hfill \\
  \label{DEF_genus}
Let $S$ be a connected compact two-dimensional manifold with boundary.  
We define ${\rm Genus} (S)$ to be the genus of the surface obtained by
  capping off the boundary components of $S$ by disks.
If $S$ is disconnected but compact, then we define
  ${\rm Genus}(S):=\sum_{k=1}^n{\rm Genus}(S_k)$ where the $S_k$ are
  the connected components of $S$.
If $S$ is non compact (but with at most countably infinite connected
  components), we define ${\rm Genus}(S):=\lim_{k\to\infty} {\rm
  Genus}(S_k)$, where $S_1\subset S_2 \subset S_3\subset \cdots$ is
  an exhausting sequence of compact regions\footnote{Recall that the
  notion of a compact region was provided in Definition
  \ref{DEF_compact_region}} in $S$.
\end{definition}
%

It is possible that a highly skeptical reader may be concerned that the
  above definition of genus may not be well defined, since a priori it
  could be the case that when \(S\) is non-compact, the definition of
  \({\rm Genus}(S)\) may depend upon the choice of exhausting sequence
  \(\{S_k\}_{k\in \mathbb{N}}\).
The proof of independence is elementary, so we do not provide it, but we
  mention the key ideas.
First, observe that the desired independence follows from showing that
  whenever \(S'\) and \(S''\) are compact regions in \(S\) satisfying 
  \begin{align*}                                                          
    S' \subset {\rm Int}(S'')\qquad\text{and}\qquad S'' \subset{\rm Int}
    (S),
    \end{align*}
  we must have 
  \begin{equation*}                                                       
      {\rm Genus}(S')\leq {\rm Genus}(S'').
    \end{equation*}
This in turn essentially follows from the fact that handle attaching only
  increases genus if both ends of the handle lie in the same connected
  component (before that attachment).

Before moving on, we will also need the concept of the arithmetic genus
  which we provide in Definition \ref{DEF_arithmetic_genus} below, but first
  we recall some additional notions.
Associated to a nodal Riemann surface is the topological space \(|S|\)
  defined by identifying a nodal point with the other point in its nodal
  pair; in other words, the space \(S/ (\underline{d}_\nu \sim
  \overline{d}_\nu)\).
As in Section 4.4 of \cite{BEHWZ}, we define \(S^D\) to
  be the oriented blow-up of \(S\) at the points $D$, and we
  let $\overline{\Gamma}_\nu:=\big( T_{\overline{d}_i}(S)\setminus
  \{0\}\big)/\mathbb{R}_+^*\subset S^D$ and \(
  \underline{\Gamma}_\nu:=\big( T_{\underline{d}_\nu}(S)\setminus
  \{0\}\big)/ \mathbb{R}_+^*\subset S^D\) denote the newly created
  boundary circles over the \(d_\nu\).

\begin{definition}[decorated marked nodal Riemann surface]\hfill\\
  \label{DEF_decorated_nodal_riemann_surface}
A \emph{decorated marked nodal Riemann surface} is a tuple \((S, j, \mu,
  D, r)\) where \((S, j, \mu, D)\) is a marked nodal Riemann surface, and
  \(r\) is a set of orientation reversing orthogonal maps
  \(\bar{r}_\nu:\overline{\Gamma}_\nu\to \underline{\Gamma}_\nu\) and 
  \(\underline{r}_\nu:\underline{\Gamma}_\nu\to \overline{\Gamma}_\nu\),
  which we call \emph{decorations}; here by orthogonal orientation
  reversing, we mean that \(r_\nu(e^{i\theta}z) = e^{-i\theta}r_\nu(z)\)
  for each \(z\in \Gamma_\nu \).
We also define \(S^{D,r}\) to be the smooth surface obtained
  by gluing the components of \(S^D\) along the boundary circles
  \(\{\overline{\Gamma}_1,\underline{\Gamma}_1,
  \overline{\Gamma}_2,\underline{\Gamma}_2, \ldots  \}\) via the
  decorations \(\bar{r}_\nu\) and \(\underline{r}_\nu\).
We will let \(\Gamma_\nu\) denote the special circles
  \(\overline{\Gamma}_\nu=\underline{\Gamma}_\nu\subset S^{D,r}\).
\end{definition}
%

We are now prepared to state the definition of the arithmetic genus.

\begin{definition}[arithmetic genus] \hfill\\
  \label{DEF_arithmetic_genus}
Let \(\mathbf{S}= (S, j, \mu, D)\) be a marked nodal nodal
  Riemann surface.
As above, let \(S^D\) be the oriented blow-up of \(S\) at the points
  \(D\), and let \(S^{D,r}\) denote the surface obtained by gluing \(S^D\)
  together along pairs of circles associated to pairs of nodal points.
We define the arithmetic genus of \(\mathbf{S}\) to be the genus of
  \(S^{D,r}\).
That is,
\begin{equation*}                                                         
  {\rm Genus}_{arith}(\mathbf{S})  = {\rm Genus}(S^{D, r}).
  \end{equation*}
\end{definition}
%

We note that it is more standard to define the arithmetic genus in terms
  of a formula involving the genera of connected components, number of
  marked points, number of nodal points, etc.
It will be convenient for later applications to have the above definition
  at our disposal, however it is equivalent to the more standard formulaic
  definition.
Indeed, we establish this in Appendix \ref{SEC_formulat_arithmetic_genus}.

%
\subsection{Pseudoholomorphic curves}
  \label{SEC_pseudoholomorphic}
Here we will provide the definition of a pseudoholomorphic curve and a few
  other related notions like stability, generally immersed maps, boundary
  immersed maps, area, etc.

\begin{definition}[almost Hermitian structures]\hfill \\
  \label{DEF_almost_hermitian}
Let \((W, J)\) be a smooth finite dimensional almost complex manifold, and
  let \(g\) be a Riemannian metric.
We say the pair \((J, g)\) is an almost Hermitian structure on \(W\)
  provided that \(J\) is an isometry for \(g\).
In such a case we call \((W, J, g)\) an almost Hermitian manifold.
\end{definition}
%

\begin{definition}[marked nodal pseudoholomorphic curve]\hfill\\
  \label{DEF_pseudholomorphic_curve}
A \emph{marked nodal pseudoholomorphic curve} is a tuple \(\mathbf{u}=(u,
  S, j, W, J, \mu, D)\) with entries as follows.  
The triple \((S, j, \mu, D)\) is a marked nodal Riemann surface, 
The pair \((W, J)\) is a smooth real \(2n\)-dimensional almost complex
  manifold, and \(u:S\to W\) is a smooth map for which $J\cdot Tu =
  Tu\cdot j$.
Finally, we require that \(u(\overline{d}_i) = u(\underline{d}_i)\) for
  all \(i\in \mathbb{N}\).
\end{definition}
%

Unless otherwise specified, we will allow \(S\) to be non-compact,
  to have smooth boundary, and to have unbounded topology (i.e. countably
  infinite connected components, boundary components, and genus).
We will say that a pseudoholomorphic curve $\mathbf{u}$ is \emph{compact}
  provided \(S\) has the structure of a compact manifold with smooth
  boundary, we will say $\mathbf{u}$ is \emph{closed} provided $S$
  has the structure of a compact manifold without boundary, and we will
  say \(\mathbf{u}\) is \emph{connected} provided that \(|S|\) is connected.

\begin{definition}[decorated pseudoholomorhpic curve]\hfill \\
  \label{DEF_decorated}
A \emph{decorated} marked nodal pseudoholomorphic curve \((\mathbf{u},
  r)\) is a pair for which \(\mathbf{u}= (u, S, j, W, J, \mu, D)\) is a
  marked nodal pseudoholomorphic curve and \((S, j, \mu, D, r)\) is a
  decorated marked nodal Riemann surface.
With \(S^{D,r}\) defined as above, we observe that the smooth map \(u:S\to
  W\) lifts to a continuous map \(u:S^{D,r}\to W\).
\end{definition}
%

\begin{definition}[generally immersed]\hfill \\
  \label{DEF_generally_immersed}
Let \(\mathbf{u} = (u, S, j, W, J, \mu, D)\) be a marked nodal
  pseudoholomorphic curve.
We shall say \(\mathbf{u}\), or \(u:S\to W\), is \emph{generally
  immersed} provided that the set of critical points of \(u:S\to W\)
  has no accumulation point.
\end{definition}
%

\begin{definition}[stable pseudoholomorphic curve]\hfill\\
  \label{DEF_stable_pseudoholomorphic_curve}
A compact marked nodal pseudoholomorphic curve \((u, S, j, W, \mu, D) \)
  is said to be \emph{stable} if and only if for each connected component
  \(\widetilde{S}\) of \(S\) at least one of the following is true:
  \begin{enumerate}                                                       
    \item 
    The restricted map \(u\big|_{\widetilde{S}}:\widetilde{S}\to W\)
    is non-constant.
    \item 
    \(\chi(\widetilde{S}) - \#(\widetilde{S}\cap \mu) - \#(\widetilde{S}
    \cap D) < 0\).
    \end{enumerate}
A non-compact marked nodal pseudoholomorphic curve \((u, S, j, W, \mu, D)
  \) is said to be \emph{stable} if and only if there exists a sequence
  \(\{S_k\}_{k\in \mathbb{N}}\) of compact real two dimensional manifolds,
  possibly with smooth boundary, with the following properties.
\begin{enumerate}                                                         
  \item 
  for each \(k\in \mathbb{N}\) we have \(S_k\subset {\rm Int}(S_{k+1})
  \subset S\)
  \item 
  \(\cup_{k=1}^\infty S_k = S\) and \((\cup_{k=1}^\infty \partial S_k)\cap
  (\mu\cup D)= \emptyset\)
  \item 
  for each \(k\in \mathbb{N}\) the pseudoholomorphic curve
  \begin{equation*}                                                       
    (u, S_k, j, W, \mu\cap S_k, D\cap S_k)
    \end{equation*}
  is stable.
  \end{enumerate}
\end{definition}
%

Note that if \((u, S, j, W, \mu, D) \) is a compact marked nodal
  pseudoholomorphic curve,  and \(\widetilde{S}\) is a connected
  component for which \mbox{\(\chi(\widetilde{S}) - \#(\widetilde{S}\cap
  \mu) - \#(\widetilde{S} \cap D) < 0\)}, then there exists a unique
  complete finite area hyperbolic metric of constant curvature \(-1\)
  on \(S':=S\setminus(\mu\cup D)\) which is in the same conformal class
  as \(j\) and for which each connected component of \(\partial S\)
  is a geodesic; we denote this metric by \(h^{j,\mu\cup D}\).

\begin{definition}[boundary-immersed pseudoholomorphic curve]\hfill \\
  \label{DEF_boundary_immersed_pseudoholomorphic_curve}
A compact marked nodal pseudoholomorphic curve \((u, S, j, W, \mu, D) \)
  is said to be \emph{boundary-immersed} if and only if either \(\partial
  S=\emptyset\) or  else the restricted map
  \begin{equation*}                                                       
    u\big|_{\partial S}:\partial S\to W  
    \end{equation*}
  is an immersion.
\end{definition}
%

\begin{lemma}[A dichotomy]\hfill \\
  \label{LEM_dichotomy}
Let \mbox{\(\mathbf{u}=(u, S, j, W, J, \mu, D)\)} be a proper
  boundary-immersed marked nodal pseudoholomorphic curve mapping into the
  almost Hermitian manifold \((W, J, g)\) which has no boundary.
Then for each connected component \(\widetilde{S}\subset S\), the
  restricted map \(u\big|_{\widetilde{S}}:\widetilde{S}\to W\) is either
  a constant map or else it is generally immersed in the sense of
  Definition \ref{DEF_generally_immersed}.
\end{lemma}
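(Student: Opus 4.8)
The plan is to reduce the statement to the classical fact that the zeros of the first derivative of a nonconstant pseudoholomorphic map are isolated --- which follows from the Carleman similarity principle --- and then to use the boundary-immersed hypothesis to exclude any accumulation of critical points at $\partial S$. First I would fix a connected component $\widetilde{S}\subset S$ and observe that a point $p$ is a critical point of $u|_{\widetilde{S}}$ precisely when $du_p=0$. Writing the equation $J\cdot Tu = Tu\cdot j$ in a local holomorphic coordinate $z=s+it$ for $j$ (which always exists on a surface, since every smooth almost complex structure in real dimension two is integrable), the map satisfies $\partial_s u + J(u)\,\partial_t u = 0$; because $J^2=-\mathrm{Id}$ this gives $\partial_t u = J(u)\,\partial_s u$, so $du_p=0$ if and only if $\xi:=\partial_s u$ vanishes at $p$. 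Thus the critical set of $u|_{\widetilde{S}}$ is exactly the zero set of the section $\xi$.

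Next I would derive the linear equation satisfied by $\xi$. Differentiating the nonlinear Cauchy--Riemann equation in $s$ and applying the chain rule to the $u$-dependence of $J$ yields an identity of the schematic form $\partial_s \xi + J(u)\,\partial_t \xi + \big(dJ(u)[\xi]\big)\,\partial_t u = 0$, that is $\bar\partial_J \xi = A\,\xi$ for a continuous real-linear zeroth-order coefficient $A$ assembled from $dJ$ and $du$. To this I would apply the Carleman similarity principle: on any coordinate disk there exist a continuous $\mathrm{GL}$-valued factor $\Phi$ and a holomorphic map $\sigma$ with $\xi=\Phi\,\sigma$. Consequently, on each such disk, either $\xi$ vanishes identically or its zeros are isolated and of finite order. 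This is the interior half of the dichotomy.

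For the boundary I would invoke the boundary-immersed hypothesis directly, thereby avoiding any boundary version of the similarity principle. If $\partial\widetilde{S}=\emptyset$ there is nothing to check; otherwise, since $u|_{\partial S}$ is an immersion, $du_p$ is nonzero for every $p\in\partial S$ (its restriction to the one-dimensional $T_p(\partial S)$ is already injective), and by continuity of $du$ this persists on a neighborhood of each boundary point within $\widetilde{S}$. Hence the critical set is a closed subset of $\mathrm{Int}(\widetilde{S})$ that stays away from $\partial S$; in particular, no critical point, and no accumulation point of critical points, can lie on $\partial\widetilde{S}$. Combined with the previous step, every possible accumulation point of the critical set is an interior point of $\widetilde{S}$, at which the similarity principle forces $\xi$ to vanish on a whole neighborhood.

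Finally I would globalize via a clopen argument. Let $A\subset\widetilde{S}$ be the set of points admitting a neighborhood on which $\xi\equiv 0$. By the local factorization $\xi=\Phi\,\sigma$, the set $A$ is open by definition and closed because any sequence in $A$ converging to $q$ forces $\sigma$, hence $\xi$, to vanish on a set accumulating at $q$, whence $\xi\equiv 0$ near $q$. As $\widetilde{S}$ is connected, $A$ is either empty or all of $\widetilde{S}$: in the latter case $du\equiv 0$ and $u|_{\widetilde{S}}$ is constant, while in the former the critical set has no interior accumulation point, and by the boundary step none on $\partial\widetilde{S}$ either, so $u|_{\widetilde{S}}$ is generally immersed. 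The hypotheses that $W$ has no boundary and that $u$ is proper serve mainly to guarantee a clean analytic setting; the essential content is entirely local. I expect the main obstacle to be the bookkeeping in the similarity principle --- verifying that the coefficient $A$ has enough regularity for the factorization to apply, and making the unique-continuation dichotomy airtight all the way up to $\partial\widetilde{S}$.
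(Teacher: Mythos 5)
Your proposal is correct and follows essentially the same route as the paper: use the boundary-immersed hypothesis to force any accumulation point of critical points into the interior, then apply unique continuation there to conclude the component is constant. The only difference is that the paper simply cites this interior step as Lemma 2.4.1 of McDuff--Salamon (whose proof is precisely your Carleman-similarity-principle argument for $\xi=\partial_s u$), whereas you reprove it from scratch.
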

%
\begin{proof}
For each connected component \(\widetilde{S}\) of \(S\) we note
  their are two possible cases: either the set of critical points of
  \(u\big|_{\widetilde{S}}: \widetilde{S}\to W\) has an accumulation point,
  or it does not.
Because \(\mathbf{u}\) is boundary-immersed, it follows that all
  accumulation points are interior points.
However, recall that any pseudoholomorphic map
  \(u:\widetilde{S}\to W\) with connected domain and with an interior
  accumulation point of critical points must be a constant map; for
  details, see Lemma 2.4.1 from \cite{MS}.
The result is then immediate.
\end{proof}

\begin{definition}[area of pseudoholomorhpic curves]\hfill \\
  \label{DEF_area}
Let \mbox{\(\mathbf{u}=(u, S, j, W, J, \mu, D)\)} be a proper
  boundary-immersed marked nodal pseudoholomorphic curve.
Let \(S_{\rm const}\subset S\) denote the union of connected components
  of \(S\) on which \(u\) is a constant map.
Then by Lemma \ref{LEM_dichotomy} it follows that the map \(u:S\setminus
  S_{\rm const}\to W\) is generally immersed in the sense of Definition
  \ref{DEF_generally_immersed}.
Consequently on \(S\setminus S_{\rm const}\) we  can define the
  following metric
  \begin{equation*}                                                       
    {\rm dist}_{u^*g}(\zeta_0,\zeta_1):=\inf
    \Big\{{\textstyle \int_0^1} \langle
    \dot{\gamma}(t),\dot{\gamma}(t)\rangle_{u^*g}^{\frac{1}{2}}dt:
    \gamma\in \mathcal{C}^1\big([0,1],S\big)\text{ and }
    \gamma(i)=\zeta_i\Big\},
    \end{equation*}
  where our convention will be that if $\zeta_0$ and
  $\zeta_1$ lie in different connected components, then ${\rm
  dist}_{u^*g}(\zeta_0,\zeta_1):=\infty$.
Thus we may regard $(S\setminus S_{\rm const},{\rm dist}_{u^*g})$
  as a metric space, in which case it can be equipped with Hausdorff
  measures $\mathcal{H}^k$.
Note that if $\mathcal{O}\subset S\setminus S_{\rm const}$ is an open set
  on which $u$ is an immersion, then $\mathcal{H}^{2}(\mathcal{O})={\rm
  Area}_{u^*g}(\mathcal{O})$.
As such, our convention will be to simply define the area
  of an arbitrary open set \(\mathcal{U}\subset S\setminus S_{\rm
  const}\) to be \({\rm Area}_{u^*g}(\mathcal{U}):=
  \mathcal{H}^{2}(\mathcal{U})\).
Finally, for an arbitrary open set \(\mathcal{U}\subset S\) we define
  \begin{equation*}                                                       
    {\rm Area}_{u^*g}(\mathcal{U}):=\mathcal{H}^{2}(\mathcal{U}\setminus
    S_{\rm const}).
    \end{equation*}
\end{definition}
%

%
\subsection{Convergence of pseudoholomorphic curves}
  \label{SEC_convergence_of_pseudoholomorphic_curves}
Here we provide a few notions of convergence of pseudoholomorphic curves.
We start with the well known definition of Gromov convergence adapted to
  the case of having free boundary.
We also recall the definition of robust \(\mathcal{K}\)-convergence in a
  Gromov sense, which is taken from \cite{F2}.
And finally, we provide the novel notion of convergence in an exhaustive
  Gromov sense, given in Definition \ref{DEF_exhaustive_gromov_convergence}.

We begin with the notion of Gromov convergence, which we adapt slightly to
  allow our curves to have free boundary.

\begin{definition}[Gromov convergence]\hfill \\
  \label{DEF_gromov_convergence}
A sequence \(\mathbf{u}_k = (u_k,S_k,j_k,W, J_k, \mu_k, D_k)\) of
  compact marked nodal stable boundary-immersed pseudoholomorphic curves
  is said to converge in a Gromov-sense to a compact marked nodal stable
  boundary-immersed pseudoholomorphic curve \(\mathbf{u}=(u,S,j,W,J,\mu,
  D)\) provided the following are true for all sufficiently large
  \(k\in \mathbb{N}\).
\begin{enumerate}                                                         
  \item 
  \(J_k\to J$ in $C^\infty\).
  \item 
  There exist sets of marked points 
  \begin{equation*}                                                       
    \mu_k'\subset S_k\setminus (\partial S_k\cup \mu_k\cup D_k) 
    \qquad\text{and}\qquad
    \mu'\subset S\setminus (\partial S \cup \mu \cup D)
    \end{equation*}
  with the property that \(\#\mu' = \# \mu_k'< \infty\), and with the
  property that for each connected component \(\widetilde{S}_k\) of
  \(S_k\) we have
  \begin{equation*}                                                       
    \chi(\widetilde{S}_k) - \#\big(\widetilde{S}_k\cap (\mu_k\cup \mu_k'
    \cup D_k)\big) < 0
    \end{equation*}
  and for each connected component \(\widetilde{S}\) of \(S\) we have
  \begin{equation*}                                                       
    \chi(\widetilde{S}) - \#\big(\widetilde{S}\cap (\mu\cup \mu' \cup
    D)\big) < 0.
    \end{equation*}
  \item 
  There exists a decoration \(r\) for \(\mathbf{u}\), a sequence of
  decorations \(r_k\) for the \(\mathbf{u}_k\), and sequences of
  diffeomorphisms \(\phi_k: S^{D,r}\to S_k^{D_k,r_k}\) such that the
  following hold
  \begin{enumerate}                                                       
    \item 
    \(\phi_k(\mu) = \mu_k\)
    \item 
    \(\phi_k(\mu') = \mu_k'\)
    \item 
    for each \(i=1,\ldots,\delta\) the curve \(\phi_k(\Gamma_i)\) is
    a \(h^{j_k,\mu_k\cup \mu_k'\cup D_k}\)-geodesic in the punctured
    surface \(S_k':= S_k\setminus (\mu_k\cup \mu_k'\cup D_k)\).
    \end{enumerate}
  \item 
  \(\phi_k^* h^{j_k,\mu_k\cup\mu_k'\cup D_k} \to h^{j,\mu\cup\mu'\cup
    D}\) in \(C_{loc}^\infty\big(S^{D,r}\setminus (\mu\cup\mu'\cup_i
    \Gamma_i) \big)\); here we have abused notation by letting
    \(h^{j,\mu\cup\mu'\cup D}\) also denote its lift to \(S^{D,r}\).
  \item 
  \(\phi_k^*u_k \to u$ in $C^0(S^{D,r})\).
  \item 
  \(\phi_k^*u_k \to u\) in \(C_{loc}^\infty(S^{D,r}\setminus
    \cup_i\Gamma_i)\).
  \item 
  For each connected component \(\Lambda\) of \(\partial \overline{S}\),
  the \(\phi_k^* h^{j_k,\mu_k\cup\mu_k'\cup D_k}\)-length of \(\Lambda\)
  is uniformly bounded away from \(0\) and \(\infty\).
  \end{enumerate}
\end{definition}
%

In general, one should not expect a sequence of pseudoholomorphic curves
  with free boundary to converge in a Gromov sense -- even when the curves
  are compact with bounded area, genus, connected components, etc.
However, if we are allowed to ``trim away'' some portion of those curves
  outside some compact set \(\mathcal{K}\), then the area and topology
  bounds are indeed sufficient to extract a subsequence for the trimmed
  curves.
We make this statement precise in Theorem \ref{THM_target_local} below,
  which was proved in \cite{F2}, but first it necessitates the statement of
  Definition \ref{DEF_K_proper_and_convergence} (\(\mathcal{K}\)-proper
  sequence of pseudoholomorphic curves) and Definition
  \ref{DEF_robust_K_convergence_gromov} (robust $\mathcal{K}$-convergence in
  Gromov sense).

\begin{definition}[$\mathcal{K}$-proper sequence of pseudoholomorphic
  curves]\hfill \\
  \label{DEF_K_proper_and_convergence}
Consider a sequence of maps \(u_k:S_k\to W\) to and from manifolds
  which possibly have boundary and may be non-compact.
Let \(\mathcal{K}\subset{\rm Int}(W)\) be a compact set in the interior
  of \(W\).
We call \(\{u_k\}_{k\in \mathbb{N}}\) a \emph{robustly
  \(\mathcal{K}\)-proper} sequence provided there exists another compact set
  \(\widetilde{\mathcal{K}}\subset {\rm Int}(W)\) for which
  \(\mathcal{K}\subset{\rm Int}(\widetilde{\mathcal{K}})\) and if
  \(u_k^{-1}(\widetilde{\mathcal{K}})\setminus \partial S_k\) is compact
  for all \(k\in \mathbb{N}\).
Similarly a single map \(u:S\to W\) is robustly \(\mathcal{K}\)-proper
  provided the constant sequence \(u,u,u,\ldots\) is robustly
  \(\mathcal{K}\)-proper.
\end{definition}
%

\begin{remark}
Definition \ref{DEF_K_proper_and_convergence} above is essentially a
  restatement of Definition 2.3 from \cite{F2}.
More precisely, the two definitions are equivalent but stated slightly
  differently.
Indeed, where Definition \ref{DEF_K_proper_and_convergence} states:
  \begin{quote}
  \emph{``We call \(\{u_k\}_{k\in \mathbb{N}}\) a \emph{robustly
    \(\mathcal{K}\)-proper} sequence provided there exists another compact set
    \(\widetilde{\mathcal{K}}\subset {\rm Int}(W)\) for which
    \(\mathcal{K}\subset{\rm Int}(\widetilde{\mathcal{K}})\) and if
    \(u_k^{-1}(\widetilde{\mathcal{K}})\setminus \partial S_k\) is compact
    for all \(k\in \mathbb{N}\).''}
  \end{quote}
  Definition 2.3 states:
  \begin{quote}
  \emph{``We call \(\{u_k\}_{k\in \mathbb{N}}\) a \emph{robustly
    \(\mathcal{K}\)-proper} sequence provided there exists another compact set
    \(\widetilde{\mathcal{K}}\subset {\rm Int}(W)\) for which
    \(\mathcal{K}\subset{\rm Int}(\widetilde{\mathcal{K}})\) and if
    \(u_k^{-1}(\widehat{\mathcal{K}})\setminus \partial S_k\) is compact for
    every compact set
    \(\widehat{\mathcal{K}}\subset\widetilde{\mathcal{K}}\).''}
  \end{quote}
Because \(\widetilde{\mathcal{K}}\) is a compact set contained in
  \(\widetilde{\mathcal{K}}\), it follows that any sequence of \(u_k\) which
  satisfy Definition 2.3 will also satisfy Definition 2.17.
Now suppose that \(u_k\) is a sequence satisfying Definition 2.17, so that
  \(u_k^{-1}(\widetilde{\mathcal{K}})\setminus \partial S_k\) is compact.
Let \(\widehat{\mathcal{K}}\subset \widetilde{\mathcal{K}}\) be a
  compact set.
Because \(W\) is a manifold, \(W\) is metrizable, and every compact set in
  a metrizable space is closed, so that \(\widetilde{\mathcal{K}}\) is
  closed, and hence \(u_k^{-1}(\widetilde{\mathcal{K}})\setminus \partial
   S_k\) is closed in the subspace topology of \(S_k\setminus \partial
   S_k\).
However, \(u_k^{-1}(\widehat{\mathcal{K}})\setminus \partial S_k\subset
  u_k^{-1}(\widetilde{\mathcal{K}})\setminus \partial S_k\) with the subset
  closed and the superset compact, so that the subset is compact.
In other words, for each compact \(\widehat{\mathcal{K}}\subset
  \widetilde{\mathcal{K}}\) we have
  \(u_k^{-1}(\widehat{\mathcal{K}})\setminus \partial S_k\) is compact,
  and thus the sequence \(u_k\) satisfies Definition 2.3 whenever it
  satisfies Definition \ref{DEF_K_proper_and_convergence}.
We conclude that the two definitions are indeed equivalent.
\end{remark}
%

With this definition in hand, we can now provide the notion of robust
  \(\mathcal{K}\)-convergence in a Gromov sense.
We note that the following definition was originally provided in
  \cite{F2}, although we have slightly modified it here to allow for the
  possibility that the sequence of curves has marked and nodal points.

\begin{definition}[robust $\mathcal{K}$-convergence in Gromov sense]\hfill \\
  \label{DEF_robust_K_convergence_gromov}
Consider an almost Hermitian manifold given by \((W,J,g)\), a sequence
  of almost Hermitian structures \((J_k,g_k)\) for which \((J_k,g_k)\to
  (J,g)\) in \(\mathcal{C}_{loc}^\infty\), a compact set
  \(\mathcal{K}\subset {\rm Int}(W)\), and a robustly
  \(\mathcal{K}\)-proper sequence of marked nodal pseudoholomorphic curves
  \(\mathbf{u}_k=(u_k,S_k,j_k, W, J_k, \mu_k, D_k)\).
We say that the \(\mathbf{u}_k\) \emph{robustly}
  \(\mathcal{K}\)\emph{-converge in a Gromov sense} provided there exists
  a compact set \(\widetilde{\mathcal{K}}\subset {\rm Int}(W)\) for which
  \(\mathcal{K}\subset {\rm Int}(\widetilde{\mathcal{K}})\), and there
  exist compact regions \(\tilde{S}_k\subset S_k\) with the property that
  \(u_k(S_k\setminus \tilde{S}_k)\subset W\setminus
  \widetilde{\mathcal{K}}\) for all \(k\in \mathbb{N}\), \((\mu_k \cup
  D_k)\cap \partial \tilde{S}=\emptyset\) for all \(k\in \mathbb{N}\), and
  the domain restricted pseudoholomorphic curves
  \begin{equation*}                                                       
    \tilde{\mathbf{u}}_k:=(u_k,\tilde{S}_k,j_k,W, J_k, \mu_k \cap
    \tilde{S}_k, D_k\cap \tilde{S}_k)
    \end{equation*}
  are stable in the sense of Definition
  \ref{DEF_stable_pseudoholomorphic_curve} and converge in a Gromov sense
  to a stable compact marked nodal
  boundary-immersed pseudoholomorphic curve
  \begin{equation*}                                                       
    \mathbf{u}:=(u, S, j, W, J, \mu, D).
    \end{equation*}
We additionally require that the sequence of marked points \(\mu_k'\)
  added to the \((\tilde{S},j_k)\) to obtain Gromov convergence are chosen
  so that lengths of each connected component of \(\partial \tilde{S}_k\),
  computed with respect to the associated Poincar\'{e} metric \(h^{j_k,
  \mu_k\cup \mu_k'\cup D_k}\), are uniformly bounded away from zero
  and infinity.
\end{definition}
%

Before providing the definition of convergence in an exhaustive Gromov
  sense, we will need the following notion, which is a special case of the
  embedding diagrams of Section \ref{SEC_direct_limit_manifolds}.

\begin{definition}[properly exhausting regions]\hfill\\
  \label{DEF_properly_exhausting_regions}
Let \((\overline{W}, \overline{J}, \bar{g})\) be an almost Hermitian
  manifold, which need not be compact.
We say a sequence of almost Hermitian manifolds \((W_k, J_k, g_k)\)
  \emph{properly exhaust} \((\overline{W}, \overline{J}, \bar{g})\)
  provided the following hold.
  \begin{enumerate}                                                       
    \item 
    For each \(k\in \mathbb{N}\) we have \(W_k\subset W_{k+1}\), and
    moreover \(W_k\) is an open subset of \(W_{k+1}\) in the \(W_{k+1}\)
    topology.
    \item 
    \(\overline{W} = \bigcup_{k\in\mathbb{N}} W_k\)
    \item 
    The smooth structure on \(W_k\) equals the smooth structure induced
    from \(W_{k+1}\).
    \item 
    The set \({\rm cl}(W_k) \subset W_{k+1}\) is a compact manifold with
    smooth boundary.
    \item 
    Regarding \((J_k, g_k)\) as almost Hermitian structures on
    \(\overline{W}\), we require \((J_k, g_k)\to (\overline{J}, \bar{g})\)
    in \(\mathcal{C}_{loc}^\infty\).
    \end{enumerate}
\end{definition}
%

We are now prepared to state the novel definition of convergence of
  pseudoholomorphic curves in an exhaustive Gromov sense.

\begin{definition}[convergence in an exhaustive Gromov sense]\hfill \\
  \label{DEF_exhaustive_gromov_convergence}
Let \((\overline{W}, \overline{J}, \bar{g})\) be a smooth almost Hermitian
  manifold, not necessarily compact, and let \((W_k, J_k, g_k)\) be a
  sequence which properly exhausts \((\overline{W}, \overline{J},
  \bar{g})\), in the sense of Definition
  \ref{DEF_properly_exhausting_regions}.
Suppose further that the tuples \(\bar{\mathbf{u}}=(\bar{u},
  \overline{S}, \bar{j}, \overline{W}, \overline{J}, \bar{\mu},
  \overline{D})\) and, for each \(k\in \mathbb{N}\), \(\mathbf{u}_k=(u_k,
  S_k, j_k, W_k, J_k, \mu_k, D_k)\), are each marked nodal proper stable
  pseudoholomorphic curves without boundary.
We say the sequence \(\{\mathbf{u}_k\}_{k\in \mathbb{N}}\)
  converges to \(\bar{\mathbf{u}}\) in an \emph{exhaustive Gromov
  sense} provided there exists a collection of compact smooth two
  dimensional manifolds with boundary \(\{\overline{S}^\ell\}_{\ell \in
  \mathbb{N}}\) with \(\overline{S}^\ell\subset \overline{S}\) for each
  \(\ell\in \mathbb{N}\), and there exists a collection of compact smooth
  two dimensional manifolds with boundary \(\{S_k^\ell\}_{\substack{\ell
  \in \mathbb{N}\\ k\geq \ell}}\) with \(S_k^\ell \subset S_k\) for
  all \(k, \ell\in \mathbb{N}\) with \(k\geq \ell\) for which the
  following hold.
\begin{enumerate}                                                         
  \item 
  \(\overline{S}^\ell\subset \overline{S}^{\ell+1}\setminus \partial
  \overline{S}^{\ell+1}\) for all \(\ell\in \mathbb{N}\)
  \item 
  \(\overline{S} = \bigcup_{\ell \in \mathbb{N}} \overline{S}^\ell\) 
  \item
  for each fixed \(k\in \mathbb{N}\) and each \(0\leq \ell \leq k-1\) we
    have \(S_k^\ell\subset S_k^{\ell+1}\setminus \partial S_k^{\ell+1}\)
  \item
  for each \(k\geq  \ell\in \mathbb{N}\) we have
  \begin{equation*}                                                       
    u_{k} ^{-1}(W_{\ell})\subset S_k^{\ell},
    \end{equation*}
  \item 
  for each fixed \(\ell\in \mathbb{N}\), the sequence 
    \begin{equation*}                                                     
      \big\{
      \big( u_k,\; 
      S_k^{\ell},\; 
      j_k,\;  
      \overline{W}, \;
      J_k, \;
      S_k^{\ell}\cap\mu_k,\; 
      S_k^{\ell}\cap D_k\big)
      \big\}_{k \geq \ell}
      \end{equation*}
    is a sequence of compact marked nodal stable boundary-immersed
    pseudoholomorphic curves which converges in a Gromov sense to the
    proper marked nodal stable boundary-immersed pseudoholomorphic curve
    \begin{equation*}                                                       
      \big(\bar{u},\; 
      \overline{S}^{\ell},\; 
      \bar{j},\;  
      \overline{W}, \;
      \overline{J}, \;
      \overline{S}^{\ell}\cap\bar{\mu},\; 
      \overline{S}^{\ell}\cap \overline{D}\big).
      \end{equation*}
  \end{enumerate}
\end{definition}
%

Before moving on to the next section, in which we prove exhaustive Gromov
  compactness, we first address two issues which establish that our notion
  of exhaustive convergence is well defined.
The first is that limits are unique, and the second is that subsequences
  converge to the same limit.
We handle both of these results with Lemma
  \ref{LEM_properties_of_exhaustive_gromov_limit} below.

\begin{lemma}[properties of the exhaustive Gromov limit]\hfill\\
  \label{LEM_properties_of_exhaustive_gromov_limit}
Let \((\overline{W}, \overline{J}, \bar{g})\) be a smooth almost Hermitian
  manifold, not necessarily compact, and let \((W_k, J_k, g_k)\) be a
  sequence which properly exhausts \((\overline{W}, \overline{J},
  \bar{g})\).
Suppose that \(\mathbf{u}_k=(u_k, S_k, j_k, W_k, J_k, \mu_k,
  D_k)\) is a sequence of pseudoholomorphic curves which converges in an
  exhaustive sense to both of the limit curves \(\bar{\mathbf{u}}=(\bar{u},
  \overline{S}, \bar{j}, \overline{W}, \overline{J}, \bar{\mu},
  \overline{D})\) and \(\dot{\bar{\mathbf{u}}}=(\dot{\bar{u}},
  \dot{\overline{S}}, \dot{\bar{j}}, \overline{W}, \overline{J},
  \dot{\bar{\mu}},\dot{\overline{D}})\). 
\emph{Then} there exists a holomorphic diffeomorphism \(\varphi:
 (\overline{S},\overline{j},\overline{\mu},\overline{D})\to
 (\dot{\overline{S}}, \dot{\bar{j}}, \dot{\bar{\mu}},
 \dot{\overline{D}})\) satisfying \(v\circ \varphi = u\).
In addition, any subsequence of the \(\mathbf{u}_k\) also converges to
  \(\bar{\mathbf{u}}\).
\end{lemma}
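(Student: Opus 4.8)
The plan is to reduce both assertions to the corresponding facts about ordinary Gromov convergence (Definition \ref{DEF_gromov_convergence}), which by hypothesis holds level-by-level in the exhaustive picture, and then to assemble the resulting level-wise data along the exhaustions using the direct-limit formalism of Section \ref{SEC_direct_limit_manifolds}.

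I would dispatch the subsequence claim first, as it is essentially formal. Given a subsequence \(\mathbf{u}_{k_i}\), retain the same limit-side regions \(\{\overline{S}^\ell\}\) and pass to the restricted collection \(\{S_{k_i}^\ell\}_{k_i\ge\ell}\). Properties (1) and (2) of Definition \ref{DEF_exhaustive_gromov_convergence} do not involve the sequence, while (3) and (4) hold for every index and hence for the subsequence. For (5), the sequence \(\{(u_{k_i},S_{k_i}^\ell,\dots)\}_{k_i\ge\ell}\) is a subsequence of a Gromov-convergent sequence, and every clause of Definition \ref{DEF_gromov_convergence} is asserted ``for all sufficiently large \(k\)'' and therefore survives passage to a subsequence: one simply restricts the diffeomorphisms \(\phi_k\), the decorations \(r_k\), and the auxiliary marked points \(\mu_k'\) to the indices \(k_i\). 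Thus \(\mathbf{u}_{k_i}\) converges to \(\bar{\mathbf{u}}\) with the same limit.

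For uniqueness, the idea is to compare the two exhaustions through the common target exhaustion \(\{W_\ell\}\), which is the only shared datum. Fix a level \(\ell\) for the first limit. Since \(\bar{u}\) is proper and \(\overline{S}^\ell\) is compact, \(\bar{u}(\overline{S}^\ell)\subset W_p\) for some \(p=p(\ell)\); using the \(C^0\)-convergence \(\phi_k^*u_k\to\bar{u}\) from clause (5) of the first convergence together with the openness of \(W_p\), one shows that for all large \(k\) the region \(\phi_k(\overline{S}^\ell)\) lands in \(u_k^{-1}(W_{p+1})\), which by property (4) of the \emph{second} convergence lies in \(\dot{S}_k^{p+1}\). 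Hence the level-\(\ell\) trimmed curves of the first exhaustion sit, for large \(k\), inside the level-\((p+1)\) trimmed curves of the second. On this common region one sequence of compact curves now Gromov-converges to two limits---\((\bar{u},\overline{S}^\ell,\dots)\) and the corresponding sub-curve of \((\dot{\bar{u}},\dot{\overline{S}}^{p+1},\dots)\)---so the uniqueness of ordinary Gromov limits (a standard feature of the convergence in Definition \ref{DEF_gromov_convergence}; cf.\ \cite{F2}) produces a holomorphic embedding \(\varphi_\ell:\overline{S}^\ell\hookrightarrow\dot{\overline{S}}\) with \(\dot{\bar{u}}\circ\varphi_\ell=\bar{u}\) on \(\overline{S}^\ell\), matching the marked and nodal data. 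Reversing the roles of the two limits produces maps in the opposite direction.

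It remains to glue the \(\varphi_\ell\) into a single holomorphic diffeomorphism. Here I would use rigidity: since the limit curves are stable and boundary-immersed, a holomorphic map intertwining \(\bar{u}\) and \(\dot{\bar{u}}\) and respecting \(\bar{\mu},\overline{D}\) is uniquely determined---on non-constant components it is pinned down by the relation \(\dot{\bar{u}}\circ\varphi=\bar{u}\) together with Lemma \ref{LEM_dichotomy} and the isolation of critical points, and on the remaining components by the special-point data used to stabilize. Uniqueness forces \(\varphi_{\ell+1}|_{\overline{S}^\ell}=\varphi_\ell\), so by property (2) and Lemma \ref{LEM_exhausting_subsets} the \(\varphi_\ell\) assemble to a holomorphic map \(\varphi:\overline{S}\to\dot{\overline{S}}\), which is the map of the statement (the relation \(v\circ\varphi=u\) being \(\dot{\bar{u}}\circ\varphi=\bar{u}\)); the reversed maps assemble to a holomorphic \(\psi:\dot{\overline{S}}\to\overline{S}\), and since \(\psi\circ\varphi\) and \(\varphi\circ\psi\) are holomorphic self-maps intertwining a limit curve with itself and fixing the special points, the same rigidity forces them to be the identity, so \(\varphi\) is the desired diffeomorphism. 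The main obstacle is the linking step: one must verify that a single level of one limit is genuinely captured by a single level of the other \emph{uniformly in} \(k\), and that the ordinary Gromov-uniqueness machinery applies to the moving subregions \(\phi_k(\overline{S}^\ell)\subset\dot{S}_k^{p+1}\); the properness of the limit curves together with property (4) is exactly what makes this comparison possible.
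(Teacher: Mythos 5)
Your overall skeleton coincides with the paper's: reduce everything to level-wise ordinary Gromov convergence, then assemble along the exhaustion (the paper's proof is a three-line appeal to exactly these ingredients plus the uniqueness of direct limits of embedding diagrams from Section \ref{SEC_direct_limit_manifolds}). Your treatment of the subsequence claim is correct, and is in fact more direct than the paper's citation of direct-limit facts. Your ``linking'' step is also correct and is a genuine addition the paper leaves implicit: compactness of \(\bar{u}(\overline{S}^\ell)\), the open nested exhaustion \(\{W_p\}\), the \(C^0\)-convergence clause of Definition \ref{DEF_gromov_convergence}, and property (4) of Definition \ref{DEF_exhaustive_gromov_convergence} do yield \(S_k^{\ell}\subset u_k^{-1}(W_{p+1})\subset \dot{S}_k^{p+1}\) for all large \(k\).

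The gap is in your gluing step. You force \(\varphi_{\ell+1}|_{\overline{S}^\ell}=\varphi_\ell\) (and later \(\psi\circ\varphi={\rm id}\)) by asserting that a holomorphic map intertwining \(\bar{u}\) and \(\dot{\bar{u}}\) and respecting \(\bar{\mu},\overline{D}\) is \emph{uniquely} determined. That rigidity statement is false: stable maps can have nontrivial automorphisms. For example, if a closed component \(T\subset\overline{S}\) carries no marked or nodal points and \(\bar{u}|_T\) is an unbranched degree-two holomorphic cover of an embedded torus, then the deck transformation \(\tau\) is holomorphic, nontrivial, satisfies \(\bar{u}\circ\tau=\bar{u}\), and \(T\) is stable because \(\bar{u}|_T\) is non-constant; Lemma \ref{LEM_dichotomy} and isolation of critical points give local determinacy of lifts, not global uniqueness. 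Hence \(\tau\circ\varphi_\ell\) is as admissible as \(\varphi_\ell\), nothing forces consecutive levels to agree, and with infinitely many levels (and possibly infinitely many components) one cannot simply ``correct by an automorphism'' without a further compactness argument you do not supply. The correct mechanism is coherence by construction, not rigidity: extract the limits of the transition maps \((\dot{\phi}_k^{p})^{-1}\circ\phi_k^{\ell}\) along one common diagonal subsequence, so that at finite \(k\) the relevant compositions telescope exactly and the compatibility squares commute in the limit; then the diffeomorphism \(\varphi\) with \(\dot{\bar{u}}\circ\varphi=\bar{u}\) comes from the uniqueness of direct limits of embedding diagrams (the ingredient the paper cites), with surjectivity from interleaving the two exhaustions. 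Note also that the convergence of these transition maps is precisely the analytic step you flag as ``the main obstacle'' but never carry out; it is not an off-the-shelf uniqueness statement for the moving subregions \(\phi_k(\overline{S}^\ell)\subset\dot{S}_k^{p+1}\), but requires the gradient-bound/no-bubbling, elliptic regularity, and removable-singularity argument that the paper performs for the maps \((\phi^2)^{-1}\circ\phi^1\) in the proof of Theorem \ref{THM_exhaustive_gromov_compactness}.
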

%
\begin{proof}
The first conclusion follows immediately from the definition of exhaustive
  Gromov convergence, the standard Gromov convergence for compact
  pseudoholomorphic curves with smooth boundary, and that direct limits of
  embedding diagrams are unique up to diffeomorphisms preserving additional
  tensors (like almost complex structures).
The second conclusion follows from these results together with the fact
  that subsequences of embedding diagrams have the same direct limit.
See Section \ref{SEC_direct_limit_manifolds} for details. 
\end{proof}

With all these preliminaries established, we can now state the main result
  of this manuscript.
\setcounter{CounterSectionEGC}{\value{section}}
\setcounter{CounterTheoremEGC}{\value{theorem}}
\begin{theorem}[exhaustive Gromov compactness]\hfill \\
  \label{THM_exhaustive_gromov_compactness}
Let \((\overline{W}, \overline{J}, \bar{g})\) be a smooth almost Hermitian
  manifold, not necessarily compact, and let \((W_k, J_k, g_k)\) be a
  sequence which properly exhausts \((\overline{W}, \overline{J},
  \bar{g})\), in the sense of Definition
  \ref{DEF_properly_exhausting_regions}.
Suppose further that the sequence denoted by 
\begin{align*}                                                            
  \{\mathbf{u}_k\}_{k\in \mathbb{N}}=\{(u_k, S_k, j_k, W_k, J_k, \mu_k,
  D_k)\}_{k\in \mathbb{N}}
  \end{align*}
  is a sequence of proper stable marked nodal pseudoholomorphic curves
  without boundary for which there also exists a sequence of large
  constants \(C_k\) with the property that for each fixed \(k\in
  \mathbb{N}\) the following hold 
  \begin{enumerate}[(C1)]
    \item 
    \( \displaystyle{\sup_{\ell \geq k}} \; {\rm Area}_{u_\ell^*g_\ell}
    (\widehat{S}_\ell^k)\leq C_k \)
    \item 
    \(\displaystyle{\sup_{\ell \geq k}} \; {\rm
    Genus}(\widehat{S}_\ell^k)\leq C_k \)
    \item 
    \(\displaystyle{\sup_{\ell \geq k}}\; \# \big((\mu_\ell \cup
    D_\ell)\cap \widehat{S}_\ell^k\big)\leq C_k \)
    \end{enumerate}
  where \(\widehat{S}_\ell^k:=u_\ell^{-1}(W_k)\). 
\emph{Then} a subsequence converges in an exhaustive Gromov sense to
  \((\bar{u}, \overline{S}, \bar{j}, \overline{W}, \overline{J}, \bar{\mu},
  \overline{D})\) which is a proper stable marked nodal pseudoholomorphic
  curve without boundary.
\end{theorem}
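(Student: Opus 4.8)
The plan is to construct the limit curve level by level over the exhausting regions $W_\ell$, extract a diagonal subsequence, and then assemble the finite-level limits into a single curve via a direct limit of an embedding diagram. First I would fix a target level $\ell$ and apply the target-local theorem (Theorem \ref{THM_target_local}), in the form of robust $\mathcal{K}$-convergence in a Gromov sense (Definition \ref{DEF_robust_K_convergence_gromov}), to the tail $\{\mathbf{u}_k\}_{k\geq \ell}$ with $\mathcal{K}={\rm cl}(W_\ell)$ and enlargement $\widetilde{\mathcal{K}}={\rm cl}(W_{\ell+1})$; here $\mathcal{K}\subset {\rm Int}(\widetilde{\mathcal{K}})$ by property (4) of Definition \ref{DEF_properly_exhausting_regions}. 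The hypotheses of the target-local theorem are supplied precisely by (C1)--(C3): on $\widehat{S}_k^\ell:=u_k^{-1}(W_\ell)$ the area, genus, and number of special points are bounded by $C_\ell$ uniformly in $k\geq \ell$, while $(J_k,g_k)\to(\overline{J},\bar{g})$ in $\mathcal{C}_{loc}^\infty$ by property (5). Properness of each $u_k$ makes $u_k^{-1}({\rm cl}(W_{\ell+1}))$ compact, so the sequence is robustly $\mathcal{K}$-proper. This yields, after passing to a subsequence, trimmed compact domains $S_k^\ell\supset u_k^{-1}(W_\ell)$ with $u_k(S_k\setminus S_k^\ell)\subset \overline{W}\setminus {\rm cl}(W_{\ell+1})$, converging in a Gromov sense to a compact boundary-immersed limit curve with domain $\overline{S}^\ell$.

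Next I would run a diagonal argument over $\ell$, refining the level-$(\ell+1)$ subsequence from the level-$\ell$ one and passing to the diagonal, so that after relabeling a single subsequence converges at every level at once. The heart of the matter is then to glue the finite-level limits. I would arrange the trimmings to be nested, $S_k^\ell\subset {\rm Int}(S_k^{\ell+1})$ (possible since the level-$\ell$ trimming can be chosen inside $u_k^{-1}({\rm cl}(W_{\ell+1}))\subset S_k^{\ell+1}$, with $\partial S_k^\ell$ mapping into the collar $W_{\ell+1}\setminus {\rm cl}(W_\ell)$), so that the inclusions $S_k^\ell\hookrightarrow S_k^{\ell+1}$ pass to the Gromov limit to give holomorphic embeddings $\psi_\ell:\overline{S}^\ell\to \overline{S}^{\ell+1}$ intertwining the limit maps, $\bar{u}^{(\ell+1)}\circ\psi_\ell=\bar{u}^{(\ell)}$, and matching marked/nodal points and decorations; well-definedness and compatibility follow from uniqueness of the Gromov limit on the overlap (as in the argument underlying Lemma \ref{LEM_properties_of_exhaustive_gromov_limit}). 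The collection $(\overline{S}^\ell,\psi_\ell)$ is then an embedding diagram in the sense of Section \ref{SEC_direct_limit_manifolds}, and I would define $(\overline{S},\bar{j})$ as its direct limit, the complex structures gluing by (DL-2) since $\psi_\ell^\ast j^{(\ell+1)}=j^{(\ell)}$, and the map $\bar{u}:\overline{S}\to\overline{W}$ arising by (DL-1) from the compatible family $\{\bar{u}^{(\ell)}\}$; the sets $\bar{\mu},\overline{D}$ and the equation $\overline{J}\cdot T\bar{u}=T\bar{u}\cdot\bar{j}$ pass to $\overline{S}$ because they are local and compatible under the $\psi_\ell$.

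Finally I would verify the four structural properties of $\bar{\mathbf{u}}$ and that the convergence is exhaustive in the sense of Definition \ref{DEF_exhaustive_gromov_convergence}. Pseudoholomorphicity is local and inherited from each level. Properness follows since $\bar{u}^{-1}(W_\ell)$ sits inside the image of the compact $\overline{S}^\ell$. Boundarylessness is automatic once the strict nesting $\overline{S}^\ell\subset \overline{S}^{\ell+1}\setminus\partial\overline{S}^{\ell+1}$ is in place, as then $\overline{S}=\bigcup_\ell\overline{S}^\ell$ is an increasing union of compacta each in the interior of the next (Lemma \ref{LEM_exhausting_subsets}); stability then follows from the non-compact criterion of Definition \ref{DEF_stable_pseudoholomorphic_curve} using $\{\overline{S}^\ell\}$ as the exhausting sequence. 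Taking $\overline{S}^\ell$ and the trimmed $S_k^\ell$ as the required compact regions, conditions (1)--(5) of Definition \ref{DEF_exhaustive_gromov_convergence} reduce exactly to the level-wise Gromov convergence established above together with $u_k^{-1}(W_\ell)\subset S_k^\ell$.

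I expect the main obstacle to be the gluing step together with arranging the strict nesting $\overline{S}^\ell\subset {\rm Int}(\overline{S}^{\ell+1})$: one must show that the level-$\ell$ Gromov limit is canonically and holomorphically identified with a subregion lying in the interior of the level-$(\ell+1)$ limit, so that the boundary created by trimming at each finite level is absorbed at the next and disappears in the direct limit. This hinges on a careful choice of trimming collars (controlling where $\partial S_k^\ell$ maps, using the boundary-length bounds built into robust $\mathcal{K}$-convergence) and on a uniqueness argument for overlapping Gromov limits that makes the embeddings $\psi_\ell$ simultaneously well defined, holomorphic, and mutually compatible.
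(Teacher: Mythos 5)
Your strategy is the same as the paper's: level-wise target-local compactness over the exhausting regions, a diagonal subsequence, holomorphic embeddings between the level-wise limit domains, and assembly via an embedding diagram and its direct limit. However, there are two genuine gaps. First, you apply Theorem \ref{THM_target_local} directly to the tails $\{\mathbf{u}_k\}_{k\geq\ell}$, but that theorem requires the curves to be \emph{generally immersed} and carries no marked or nodal data, whereas the $\mathbf{u}_k$ are marked nodal stable curves which may well contain constant components (stability permits these when enough special points are present). This is exactly why the paper inserts Corollary \ref{COR_target_local_gromov_compactness}: one must split off the constant components, apply the target-local theorem to the generally immersed remainder (Lemma \ref{LEM_dichotomy}), and --- crucially --- choose the trimming region so that it avoids the limit $\sigma$ of the finite sets $\sigma_k = u_k(\mu_k\cup D_k\cup S_k^{\rm const})$, since otherwise the trimming could remove one point of a nodal pair $\{\underline{d},\overline{d}\}$ but not the other, destroying the nodal structure; one must then re-stabilize with added marked points controlling boundary lengths. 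None of this appears in your proposal, and conditions (C1)--(C3) alone do not make Theorem \ref{THM_target_local} applicable.

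Second, and more seriously, the gluing step that you correctly flag as the main obstacle is left unresolved, yet it is the analytic heart of the proof. Your claim that the inclusions $S_k^\ell\hookrightarrow S_k^{\ell+1}$ ``pass to the Gromov limit'' by ``uniqueness of the Gromov limit on the overlap'' is not available off the shelf: the reparameterizing diffeomorphisms $\phi_k^\ell$ and $\phi_k^{\ell+1}$ produced by Gromov convergence at the two levels are a priori unrelated, so no map $\overline{S}^\ell\to\overline{S}^{\ell+1}$ is induced by soft arguments, and Lemma \ref{LEM_properties_of_exhaustive_gromov_limit} cannot be invoked since its proof presupposes precisely the structure being built here. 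The paper constructs $\check{\psi}_\ell$ by hand: it forms the transition maps $(\phi_k^{\ell+1})^{-1}\circ\phi_k^\ell$, observes these are holomorphic with respect to domain and target structures converging in $\mathcal{C}_{loc}^\infty$ away from the special circles, proves interior gradient bounds by a bubbling contradiction (a nonconstant entire holomorphic map into a disk), extracts a limit via elliptic regularity and Arzel\`a--Ascoli, extends it over the nodal points by the removable singularity theorem, and only then verifies $\bar{u}^{\ell+1}\circ\check{\psi}_\ell=\bar{u}^\ell$ and the matching of marked and nodal points. Without this argument (or a substitute for it), your embedding diagram is not constructed, and the direct-limit assembly in your final paragraph has nothing to act on; so the proposal is the right plan with its central step missing.
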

%

%
\section{Proof of exhaustive Gromov compactness}
  \label{SEC_proof_exhaustive_Gromov_compactness}
The main purpose of this section is to establish our main result, namely
  the validity of exhaustive Gromov compactness; see Theorem
  \ref{THM_exhaustive_gromov_compactness} below.
Before doing so, we first state the target-local Gromov compactness
  theorem from \cite{F2}, and generalize it to handle the case that the
  curves in question are marked, nodal, and have constant components.

\begin{theorem}[Target-local Gromov compactness]\hfill \\
  \label{THM_target_local}
Let $(M,J,g)$ be an almost Hermitian manifold, and let $(J_k,g_k)$ be
  a sequence of almost Hermitian structures which converge in $C^\infty$
  to $(J,g)$.
Also let $\mathcal{K}\subset {\rm Int}(M)$ be a compact region, and let
  $\mathbf{u}_k$ be a sequence of generally immersed $J_k$-curves which
  are robustly $\mathcal{K}$-proper and satisfy
  \begin{enumerate}
    \item ${\rm Area}_{u_k^*g_k}(S_k)\leq C_A <\infty$
    \item ${\rm Genus}(S_k)\leq C_G <\infty$.
    \end{enumerate}
Then a subsequence robustly $\mathcal{K}$-converges in a Gromov sense.
\end{theorem}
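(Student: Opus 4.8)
The plan is to treat this as a generalization of the target-local compactness theorem proved in \cite{F2}, whose conclusion I take as a black box for generally immersed curves carrying no marked or nodal data, and then to recover the marked and nodal structure by a subsequence argument. First I would forget the extra data: from $\mathbf{u}_k=(u_k,S_k,j_k,M,J_k,\mu_k,D_k)$ pass to the underlying generally immersed pseudoholomorphic maps $(u_k,S_k,j_k,M,J_k)$. Since the area, genus, and robust $\mathcal{K}$-properness of a curve are unaffected by forgetting $\mu_k$ and $D_k$, the hypotheses of the \cite{F2} result hold verbatim, so after passing to a subsequence there are a compact set $\widetilde{\mathcal{K}}$ with $\mathcal{K}\subset\mathrm{Int}(\widetilde{\mathcal{K}})$, compact trimmings $\tilde S_k\subset S_k$ with $u_k(S_k\setminus\tilde S_k)\subset M\setminus\widetilde{\mathcal{K}}$, decorations $r_k$, a limit stable compact boundary-immersed curve $(u,S,j,M,J)$ with decoration $r$, and diffeomorphisms $\phi_k:S^{D,r}\to\tilde S_k^{D_k,r_k}$ realizing the convergences (1)--(7) of Definition \ref{DEF_gromov_convergence}. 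I would arrange the trimming so that $\partial\tilde S_k$ meets neither $\mu_k$ nor $D_k$, which is possible because both sets are discrete and closed and the boundary may be perturbed within the region where $u_k$ maps outside $\widetilde{\mathcal{K}}$.

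The heart of the argument is to carry the marked and nodal points through this limit. On the compact trimming $\tilde S_k$ the sets $\mu_k\cap\tilde S_k$ and $D_k\cap\tilde S_k$ are finite, and after passing to a further subsequence I would assume their cardinalities are constant in $k$; the uniform bound needed for this is precisely what is furnished in the intended applications, and when the theorem is applied at a fixed level it follows from the hypotheses bounding the number of special points in the relevant region. Pulling back by $\phi_k^{-1}$ gives finitely many points in the fixed surface $S^{D,r}$; by compactness, pass to a subsequence so that each converges. For a marked point, its limit position defines a point of $S$, which I add to $\mu$ unless it limits onto one of the special circles $\Gamma_i$ or onto an existing special point, in which case I record it on the appropriate component produced by the neck degeneration, introducing it as one of the auxiliary marked points $\mu'$ of Definition \ref{DEF_gromov_convergence} when needed for stability. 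For a nodal pair $(\underline d^k,\overline d^k)$ with $u_k(\underline d^k)=u_k(\overline d^k)$, the limiting pair $(\underline d^\infty,\overline d^\infty)$ inherits $u(\underline d^\infty)=u(\overline d^\infty)$ from the $C^0$ convergence $\phi_k^*u_k\to u$, so the nodal condition of Definition \ref{DEF_pseudholomorphic_curve} persists; I add these pairs to $D$. The standard Gromov bookkeeping then dictates how pairs that collide, that merge with marked points, or that are swallowed by the pinching necks are to be recorded, again using auxiliary marked points to keep each component stable.

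With $\mu$ and $D$ so defined, I would verify that $\mathbf{u}=(u,S,j,M,J,\mu,D)$ is a stable, compact, marked nodal, boundary-immersed curve. Boundary-immersion is inherited from the \cite{F2} limit, since $\partial\tilde S_k$ avoids the special points and $u_k|_{\partial\tilde S_k}$ is immersed; stability follows from Definition \ref{DEF_stable_pseudoholomorphic_curve}, because every non-constant component is already stable in the \cite{F2} limit and the topological condition on any remaining component is secured by the auxiliary marked points introduced above, exactly as in the stabilization built into Definition \ref{DEF_gromov_convergence}. Finally I would assemble $r$, $r_k$, $\phi_k$, $\mu_k'$, $\mu'$ and check that properties (1)--(7) of Definition \ref{DEF_gromov_convergence} hold for the full marked nodal data, so that the trimmed curves $\tilde{\mathbf{u}}_k$ converge in a Gromov sense to $\mathbf{u}$ in the precise sense of Definition \ref{DEF_robust_K_convergence_gromov}.

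The step I expect to be the main obstacle is the bookkeeping of special points under degeneration: controlling the number of marked and nodal points in the trimming (which a priori could grow with $k$, and which the black-box geometric theorem does not itself constrain), and determining precisely where special points go when they converge into the pinching necks, collide with one another, or merge with the nodal circles $\Gamma_i$. Producing a stable limit with the correct nodal identifications and the correct auxiliary marked points --- rather than merely a convergent underlying map --- is where the real work lies, while the geometric compactness itself is supplied wholesale by \cite{F2}.
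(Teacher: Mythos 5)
Your first step is, by itself, the paper's entire proof of this statement: Theorem \ref{THM_target_local} is proved in the paper by a single line, namely that it is a restatement of Theorem 3.1 of \cite{F2}. So taking that result as a black box already settles the theorem as written --- note that in the paper's reading the curves here carry no marked or nodal data (the hypothesis that the $\mathbf{u}_k$ are generally immersed also rules out constant components), and the conclusion is just robust $\mathcal{K}$-convergence of the underlying maps.

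Everything after your first paragraph is therefore not part of a proof of this theorem but a re-derivation of the paper's Corollary \ref{COR_target_local_gromov_compactness}, and as a proof of that corollary it has two genuine gaps. First, the obstacle you flag honestly --- that the number of special points in the trimmed region could a priori grow with $k$ --- is real and cannot be discharged by appeal to ``the intended applications'': the paper simply imposes the bound $\#(\mu_k\cup D_k)\le C$ as hypothesis (3) of the corollary, and some such bound is forced, since condition (3a) of Definition \ref{DEF_gromov_convergence} requires the diffeomorphisms $\phi_k$ to match marked-point sets of equal finite cardinality. Second, arranging $\partial\tilde S_k$ to avoid $\mu_k\cup D_k$ only keeps special points off the boundary; it does not prevent a nodal pair from being \emph{split} by the trimming, with $\underline{d}$ retained in $\tilde S_k$ and $\overline{d}$ discarded, which would leave the limit set $D$ containing unpaired points and so not a nodal set at all. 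The paper's corollary handles exactly this: it passes to a subsequence along which the image sets $\sigma_k=u_k(\mu_k\cup D_k\cup S_k^{\rm const})$ converge to a finite set $\sigma$, then chooses the trimming shells $\widehat{W}^\pm$ so that $\sigma\cap\big(\widehat{W}^+\setminus{\rm Int}(\widehat{W}^-)\big)=\emptyset$, so that no special point maps into the region where trimming occurs; since $u_k(\underline{d})=u_k(\overline{d})$, each nodal pair then survives or is discarded as a unit. The corollary must also readmit constant components (which general immersion excludes), and the paper does so by splitting off $S_k^{\rm const}$, applying Theorem \ref{THM_target_local} to the remainder, and adjoining back the constant components mapping into $\widehat{W}^+$ --- a step absent from your sketch.
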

%
\begin{proof}
This is nothing more than a restatement of Theorem 3.1 from \cite{F2}.
\end{proof}
%

\begin{corollary}[Target-local Gromov compactness for marked nodal stable
  curves]
  \label{COR_target_local_gromov_compactness}
Let $(W,J,g)$ be an almost Hermitian manifold, possibly with boundary,
  and let $(J_k,g_k)$ be a sequence of almost Hermitian structures which
  converge in $C^\infty$ to $(J,g)$.
Also let $\mathcal{K}^-, \mathcal{K}^+\subset {\rm Int}(W)$ be compact
  regions, satisfying \(\mathcal{K}^-\subset {\rm Int}(\mathcal{K}^+)\), and
  let $\mathbf{u}_k=(u_k, S_k, j_k, W, J_k, \mu_k, D_k)$ be a sequence of
  stable compact marked nodal pseudoholomorphic curves satisfying
  \(u_k(\partial S_k) \cap \mathcal{K}^+ = \emptyset\) and suppose there
  exists a large positive constant \(C>0\) for which
  \begin{enumerate}
    \item 
    \({\rm Area}_{u_k^*g_k}(S_k)\leq C\),
    \item 
    \({\rm Genus}(S_k)\leq C\),
    \item
    \(\# \big(\mu_k \cup D_k\big) \leq C\)
    \end{enumerate}
Then, after passing to a subsequence (still denoted with subscripts
  \(k\)), there exist compact surfaces with boundary \(\check{S}_k\subset
  S_k\) with the following properties
\begin{enumerate}                                                         
  \item the following are compact pseudoholomorphic curves
    \begin{equation*}                                                     
      (u_k, \check{S}_k, j_k, \mu_k \cap \check{S}_k, D_k
      \cap \check{S}_k)
      \end{equation*}
  \item these domain-restricted converge in a Gromov sense to a compact
    stable marked nodal boundary immersed pseudoholomorphic curve.
  \item \(u_k(S_k\setminus \check{S}_k) \subset W\setminus \mathcal{K}^-\)
  \end{enumerate}
\end{corollary}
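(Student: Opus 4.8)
The plan is to deduce the corollary from Theorem \ref{THM_target_local} by isolating the three features absent from that result: constant components, marked points, and nodal points. The central observation that makes this reduction clean is that these features are either automatically discarded by the trimming or are controlled in bounded number by the hypotheses. First I would record that any \emph{constant} connected component \(\widetilde{S}\) of \(S_k\) with \(\partial\widetilde{S}\neq\emptyset\) must be trimmed away: since \(u_k\) is constant on \(\widetilde{S}\) and \(\partial\widetilde{S}\subset\partial S_k\), its entire image equals \(u_k(\partial\widetilde{S})\subset u_k(\partial S_k)\subset W\setminus\mathcal{K}^+\subset W\setminus\mathcal{K}^-\). Hence every constant component we \emph{retain} is closed, and by the stability condition of Definition \ref{DEF_stable_pseudoholomorphic_curve} each such closed component satisfies \(2-2g-s<0\), where \(g\) is its genus and \(s\) the number of marked and nodal points it carries; together with hypotheses (2) and (3) this bounds the number and topological type of retained constant components, so after passing to a subsequence I may assume these are fixed. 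I would also pass to a subsequence so that \(\#(\mu_k\cup D_k)\) is constant in \(k\).

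Next I would apply the base theorem to the non-constant part. Writing \(S_k=S_k^{\rm c}\sqcup S_k^{\rm nc}\), on \(S_k^{\rm nc}\) every component is non-constant, so by Lemma \ref{LEM_dichotomy} and the isolatedness of interior critical points the only possible accumulation of critical points is along \(\partial S_k\), which maps outside \(\mathcal{K}^+\). I would therefore perform a preliminary trim of \(S_k^{\rm nc}\) along regular level sets of a defining function for \(\mathcal{K}^+\) (available by Sard's theorem), producing compact curves \(S_k^\flat\) with \(u_k^{-1}(\mathcal{K}^+)\subset{\rm Int}(S_k^\flat)\), finitely many critical points, and boundary disjoint from \(\mathcal{K}^+\). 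These \(S_k^\flat\) are generally immersed in the sense of Definition \ref{DEF_generally_immersed}, inherit the area and genus bounds, and are robustly \(\mathcal{K}^-\)-proper with \(\widetilde{\mathcal{K}}=\mathcal{K}^+\) (using \(u_k(\partial S_k)\cap\mathcal{K}^+=\emptyset\) to see that \(u_k^{-1}(\mathcal{K}^+)\setminus\partial S_k^\flat=u_k^{-1}(\mathcal{K}^+)\) is compact). Theorem \ref{THM_target_local} then yields a subsequence that robustly \(\mathcal{K}^-\)-converges in a Gromov sense, providing compact regions \(\tilde{S}_k\subset S_k^\flat\) whose domain restrictions are boundary-immersed, stable, and Gromov-convergent to a stable compact marked nodal boundary-immersed limit, with \(u_k(S_k^\flat\setminus\tilde{S}_k)\subset W\setminus\mathcal{K}^-\).

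It then remains to reintroduce the special points and the retained constant components. I would track the finitely many points of \(\mu_k\cup D_k\) through the \(C^0\) and \(C^\infty_{\rm loc}\) convergence supplied by Definition \ref{DEF_gromov_convergence}; after a further subsequence each converges either to an interior point of the limit domain (where it is recorded as a marked or nodal point) or into the nodal/puncture structure \(\cup_i\Gamma_i\). The nodal constraint \(u_k(\overline{d}_i)=u_k(\underline{d}_i)\) passes to the limit by \(C^0\)-convergence, and since the two members of a nodal pair share an image, they are retained or trimmed together. I would then set \(\check{S}_k\) to be the union of \(\tilde{S}_k\), the retained closed constant components, and small neighborhoods guaranteeing that every surviving special point and its nodal partner lie in \(\check{S}_k\), choosing \(\partial\check{S}_k\) along regular level sets where \(u_k\) is immersed and which avoid the finite set \(\mu_k\cup D_k\). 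The retained constant components converge as ghost components once their conformal structures are stabilized, which their fixed special points already accomplish.

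Finally I would verify the three conclusions: conclusion (1) holds since \(\check{S}_k\) is by construction a compact region with smooth boundary; conclusion (3) holds since everything removed maps outside \(\mathcal{K}^-\) (both the trimmed non-constant part and the discarded constant components with boundary); and conclusion (2) holds because the non-constant part converges in a Gromov sense by the base theorem, the closed constant components converge as stable ghost components, and the tracked special points and nodal pairing assemble into the limit. Stability of the limit is automatic on non-constant components and follows from \(2-2g-s<0\) on the retained constant ones, while boundary-immersedness holds on the non-constant part by the base theorem and vacuously on the closed constant components, which carry no part of \(\partial\check{S}_k\). The main obstacle I anticipate is precisely this bookkeeping step: ensuring simultaneously that the trimming can be chosen to include each surviving special point together with its nodal partner while keeping \(\partial\check{S}_k\) immersed and disjoint from \(\mu_k\cup D_k\), and that stability, the nodal condition, and boundary-immersedness all descend to the limit. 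The genuine analytic work — the area and genus control of the number of components and the bubbling analysis — is entirely carried by Theorem \ref{THM_target_local}.
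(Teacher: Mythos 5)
Your overall architecture matches the paper's: strip out the constant components and special points, apply Theorem \ref{THM_target_local} to the generally immersed part (via Lemma \ref{LEM_dichotomy}), then reassemble and re-establish Gromov convergence for the marked nodal curves. However, there is a genuine gap at exactly the step you flag as "bookkeeping," and your proposed resolution does not work. The claim that ``since the two members of a nodal pair share an image, they are retained or trimmed together'' is false: the trimming produced by Theorem \ref{THM_target_local} is a trimming of the \emph{domain}, and its conclusion \(u_k(S_k^\flat\setminus\tilde S_k)\subset W\setminus\mathcal{K}^-\) only says that what is removed maps outside \(\mathcal{K}^-\); it does not say that every point mapping outside \(\mathcal{K}^-\) is removed. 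Indeed the retained curves \(\tilde S_k\) necessarily contain points (e.g.\ their boundaries) mapping into \(W\setminus\mathcal{K}^-\). So a nodal pair whose common image lies in the collar region where trimming occurs can perfectly well be split, one member kept and one discarded, and nothing in your argument prevents this. The paper's proof devotes a specific mechanism to this: before invoking the target-local theorem it passes to a subsequence so that the finite image sets \(\sigma_k=u_k(\mu_k\cup D_k\cup S_k^{\rm const})\) converge to a finite set \(\sigma\), and then chooses auxiliary compact regions \(\widehat W^-\subset\widehat W^+\) (interpolating between \(\mathcal{K}^-\) and \(\mathcal{K}^+\)) whose difference \(\widehat W^+\setminus{\rm Int}(\widehat W^-)\) is disjoint from \(\sigma\), so that for all large \(k\) no special point maps into the region where any trimming takes place; pairs are then kept or discarded together automatically. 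Your fallback of enlarging \(\check S_k\) by ``small neighborhoods'' of orphaned nodal partners is not a repair: those added pieces lie in the part of the domain over which the base theorem gives no control, and Gromov convergence (Definition \ref{DEF_gromov_convergence}) requires global diffeomorphisms of the blown-up domains together with \(C^0\) convergence everywhere, so one cannot append uncontrolled disks to a Gromov-convergent sequence and retain convergence.

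A second, related gap is your treatment of the marked points by ``tracking'' them through the convergence supplied by the base theorem. After a subsequence the tracked points do converge, but they may collide with one another, with nodes, or with the special circles \(\Gamma_i\); in those cases the naive limit is not a marked nodal Riemann surface in the sense of Definition \ref{DEF_nodal_riemann_surface}, and the correct Gromov limit contains additional bubble components that the unmarked convergence simply does not see (this is the standard phenomenon that Gromov limits depend on the marked-point data). This is why the paper, after forming \(\check S_k=\widetilde S_k\cup\check S_k^{\rm const}\) with its marked and nodal points, re-runs the compactness argument for these curves — stabilizing by adding marked points \(\mu_k'\), invoking the hyperbolic metrics \(h^{j_k,\mu_k\cup\mu_k'\cup D_k}\), performing bubbling analysis, and controlling boundary lengths using that the boundary collars avoid \(\sigma\) — rather than transporting the unmarked limit. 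Your proposal needs both of these ingredients (the choice of trimming region avoiding the limit set \(\sigma\), and a re-run of the bubbling/stabilization argument with the special points included) to become a proof.
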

%

Before proceeding with the proof, we comment on its statement and
  conclusions.
To that end, we view the result in light of an example.
Consider the compact unit ball in \(\mathbb{R}^4\) contained inside a
  concentric compact ball of radius three.
Also consider a sequence of compact pseudoholomorphic curves (with
  boundary) with images in the larger ball, and which map the boundary of
  the domains to a small neighborhood of the boundary of the large ball.
Assuming no nodes, no marked points, no genus, but assuming area bounds,
  does a subsequence converge?
Without trimming the curves, the answer is immediately no;
  counter-examples are easy to find.
Thus Corollary \ref{COR_target_local_gromov_compactness} and Theorem
  \ref{THM_target_local} both guarantee that one can find a trimming which
  guarantees that a subsequence converges.
However, these results also guarantee that the portions of the domains
  that get trimmed away are not in the region of interest.
To elaborate on this point, let \(\mathcal{K}^-\)  be the compact unit
  ball in our example, and let \(\mathcal{K}^+\) be the compact ball of
  radius two.
The region of interest will be \(\mathcal{K}^-\).
The third conclusion of Corollary
  \ref{COR_target_local_gromov_compactness} is that the image of the
  \(S_k\setminus \check{S}_k\) by \(u_k\) (that is, the image of the
  portions of the curves that get trimmed away) live in \(W\setminus
  \mathcal{K}^-\); that is, these sets live outside the unit ball.
In practice, one will be interested in the portion of curves that has
  image in \(B_1(0) = \mathcal{K}^-\), so the hypotheses demand that our
  curves have boundary outside the even larger set \(B_2(0) =
  \mathcal{K}^+\), and the conclusions guarantee that we only trim away
  portions that have image in \(B_3(0)\setminus B_1(0)= W\setminus
  \mathcal{K}^-\), which keeps intact those portions in which we are
  interested.

Note: There is an odd case which the above result covers as well, which we
  describe at present.
Namely, it is possible that the curves in the initial sequence have images
  always contained in \(W\setminus \mathcal{K}^+\), in which case it could
  be the case that \(S_k \setminus \check{S}_k = \emptyset\).
That is, the entire curve is trimmed away, leaving nothing but empty
  curves.
We allow this as a possibility, and note that the above result is highly
  non-trivial because of the condition that \(u_k(S_k\setminus \check{S}_k)
  \subset W\setminus \mathcal{K}^-\).
Indeed, this latter conclusion guarantees that if \(u_k(S_k)\cap
  \mathcal{K}^-\neq \emptyset\) for all \(k\in \mathbb{N}\), then
  \(\check{S}_k\neq \emptyset\) for all \(k\in \mathbb{N}\), and hence the
  limit curve is non-empty (by definition of Gromov convergence).
The key idea is that one is interested in the portion of the curves that
  have image in \(\mathcal{K}^-\); if no part of the curves have image in
  that region, then there is no part of the curves in which we are
  interested, and trimming down to the empty set immediately achieves the
  desired result, since it is trivial to show a sequence of empty curves
  converges.
On the other hand, the condition that \(u_k(S_k\setminus \check{S}_k)
  \subset W\setminus \mathcal{K}^-\) guarantees that any portion of the
  curves in which we are interested are not trimmed away.

We now proceed with the proof.

\begin{proof}
As a preliminary step, we let \(S_k^{\rm const}\) denote the union of
  the connected components of \(S_k\) on which \(u_k\) is locally constant.
We then define the sequence of sets
  \begin{equation*}                                                       
    \sigma_k:=u_k\big(\mu_k\cup D_k \cup S_k^{\rm const}\big).
    \end{equation*}
Note that these are finite sets with boundedly many elements, and hence
  we may pass to a subsequence, still denoted with subscripts \(k\),
  so that they converge in the following sense.
There exists a finite set \(\sigma\subset W\) with the property that for
  each compact set \(K\subset W\) and each \(\epsilon>0\) there exists
  a \(k'=k'(\epsilon, K)\in \mathbb{N}\) such that for all \(k\geq k'\)
  we have
  \begin{equation*}                                                       
    \sigma_k \subset (W\setminus K) \cup \bigcup_{p\in \sigma}
    \mathcal{B}_\epsilon^{\bar{g}}(p)
    \end{equation*}
  where \(\mathcal{B}_\epsilon^{\bar{g}}(p)\) is an open
  \(\bar{g}\)-metric ball of radius \(\epsilon\) centered at the
  point \(p\).
We then fix compact regions \(\widehat{W}^-, \widehat{W}^+\subset W\), so
  that
  \begin{align*}                                                          
    \mathcal{K}^- \subset {\rm Int}(\widehat{W}^-), \qquad\widehat{W}^-
    \subset {\rm Int}(\widehat{W}^+), \quad\text{and}\quad
    \widehat{W}^+\subset {\rm Int}(\mathcal{K}^+)
    \end{align*}
  and
  \begin{align*}                                                          
      \sigma \cap \big(\widehat{W}^+ \setminus {\rm
      Int}(\widehat{W}^-)\big) = \emptyset.
      \end{align*}
This is possible essentially because \(\sigma\) is finite and due to
  properties of a compact region; see Definition
  \ref{DEF_compact_region}.
We then define 
  \begin{align*}                                                          
    \widehat{S}_k:= u_k^{-1}\big({\rm Int}(\widehat{W}^+) \big)\setminus
    S_k^{\rm const}
    \end{align*}
  for each \(k\in \mathbb{N}\), as well as the pseudoholomorphic curves
  \begin{equation*}                                                      
    \hat{\mathbf{u}}_k=(u_k, \widehat{S}_k, j_k, {\rm Int}(\widehat{W}^+),
    J_k, \emptyset, \emptyset ).
    \end{equation*}
With this established, we see by Lemma \ref{LEM_dichotomy} that each
  \(\hat{\mathbf{u}}_k\) is generally immersed, and hence Theorem
  \ref{THM_target_local} applies to this sequence.
Consequently, we pass to the subsequence (still denoted with subscripts
  \(k\)) which robustly \(\mathcal{K}\)-converges in a Gromov sense,
  with \(\mathcal{K}=\widehat{W}^-\).
As such, we let \(\widetilde{\mathcal{K}}\subset {\rm
  Int}(\widehat{W}^+)\subset W\) be
  the compact region for which \(\widehat{W}^-\subset {\rm
  Int}(\widetilde{\mathcal{K}})\), and let \(\widetilde{S}_k\subset
  \widehat{S}_k\) denote the compact regions guaranteed by Theorem
  \ref{THM_target_local} for which \(u(\widehat{S}_k\setminus
  \widetilde{S}_k)\subset W\setminus \widetilde{\mathcal{K}}\)
  so that the (sub)sequence of compact (marked nodal) boundary-immersed
  pseudoholomorphic curves
  \begin{equation}\label{EQ_some_curves_1}                                
    \tilde{\mathbf{u}}_k=(u_k, \widetilde{S}_k, j_k, W, J_k, \emptyset,
    \emptyset)
    \end{equation}
  converges in a Gromov sense to a compact (marked nodal)
  boundary-immersed pseudoholomorphic curve.

At this point, we let \(\check{S}_k^{\rm const} :=S_k^{\rm const}
  \cap u_k^{-1}(\widehat{W}^+)\); or in words, we let
  \(\check{S}_k^{\rm const}\) denote the connected components
  of \(S_k\) on which \(u_k\) is locally constant and takes values in
  \(\widehat{W}^+\).
We then define \(\check{S}_k: = \widetilde{S}_k \cup \check{S}_k^{\rm
  const}\), \(\check{\mu}_k:=\mu_k \cap \check{S}_k\), and \(\check{D}_k:
  = D_k \cap \check{S}_k\) for each \(k\in \mathbb{N}\), and we consider
  the sequence of compact marked nodal boundary-immersed stable
  pseudoholomorphic curves
  \begin{equation}\label{EQ_some_curves_2}                                
    (u_k, \check{S}_k, j_k, W, J_k, \check{\mu}_k, \check{D}_k).
    \end{equation}
It is important to note that for all sufficiently large \(k\), these
  curves are nodal; that is, that the \(\check{D}_k\) are indeed sets of
  nodal pairs in \(\check{S}_k\).
A priori, the concern is that by trimming the curves from \(S_k\)
  to \(\widehat{S}_k\cup \check{S}_k^{\rm const}\) and again from
  \(\widehat{S}_k\cup \check{S}_k^{\rm const}\) to \(\check{S}_k\),
  we may have ``trimmed away'' one but not both points in a nodal pair.
We note that this is not possible for the trimming from \(S_k\) to
  \(\widehat{S}_k\cup \check{S}_k^{\rm const}\) since \(\widehat{S}_k
  \cup \check{S}_k^{\rm const}= u_k^{-1}\big({\rm
  Int}(\widehat{W}^+)\big)\), and since nodal pairs \(\{\underline{d},
  \overline{d}\}\) satisfy \(u_k(\underline{d})=u_k(\overline{d})\).
It is also not possible to split a nodal pair by trimming from
  \(\widehat{S}_k\cup \check{S}_k^{\rm const}\) to \(\check{S}_k\) for
  sufficiently large \(k\) because
  \begin{equation*}                                                       
    u_k\big(\widehat{S}_k\cup \check{S}_k^{\rm const} \setminus
    \check{S}_k\big)\subset {\rm Int}(\widehat{W}^+)\setminus
    \widehat{W}^-,
    \qquad
    u_k\big(\mu_k\cup D_k\cup S_k^{\rm const}\big)=\sigma_k\to \sigma,
    \end{equation*}
  and
  \begin{equation*}                                                       
    \sigma\cap \big(\widehat{W}^+\setminus {\rm Int}(\widehat{W}^- )\big)=
    \emptyset.
    \end{equation*}

Returning our attention to the curves provided in equation
  (\ref{EQ_some_curves_2}), we note that a further subsequence of
  this sequence converges in a Gromov sense to a stable compact marked
  nodal boundary-immersed pseudoholomorphic curve.
Indeed, the Gromov convergence follows from a standard straightforward
  argument which we briefly sketch.
First, we add sequences of marked points \(\mu_k'\subset (\check{S}_k\cup
  \mu_k \cup D_k)\) to stabilize the underlying domains.
Note that only boundedly many points need to be added to each curve
  since the number of connected components of the \(S_k\) on which \(u_k\)
  is locally constant is uniformly bounded; this follows from the fact
  that curves given in the hypotheses of Corollary
  \ref{COR_target_local_gromov_compactness} are stable and the fact that
  \(\#\mu_k +\#D_k< C\).
This guarantees the existence of the associated finite area hyperbolic
  metrics \(h^{j_k, \mu_k\cup \mu_k'\cup D_k}\) on the punctured surfaces
  \(\check{S}_k\setminus (\mu_k\cup \mu_k'\cup D_k)\) where the boundaries
  are geodesics.
Standard bubbling analysis follows, in which one further adds marked
  points as needed so gradient bounds (associated to the hyperbolic metric)
  are obtained for these maps.
Note that because we guaranteed that \(\sigma\cap
  \big(\widehat{W}^+\setminus {\rm Int}(\widehat{W}^-)\big)=\emptyset\), and
  because the curves
  \begin{equation*}                                                       
    \tilde{\mathbf{u}}_k=(u_k, \widetilde{S}_k, j_k, W, J_k, \emptyset,
    \emptyset)
    \end{equation*}
  converge in a Gromov sense to a boundary-immersed
  curve \(\tilde{\mathbf{u}}_\infty\) for
  which we have \(\tilde{u}_\infty(\partial
  \widetilde{S}_\infty)\subset {\rm Int}(\widehat{W}^+)\setminus
  \widehat{W}^-\), it follows that there exist annular neighborhoods
  of the \(\partial \check{S}_k\) with moduli uniformly bounded away from
  zero which are disjoint from both \(\check{\mu}_k\cup \check{D}_k\).
Furthermore because the \(\tilde{\mathbf{u}}_k\) converge in a Gromov
  sense, we have gradient bounds for the maps in these same annular
  neighborhoods of the boundary.
It then follows that the additional marked points \(\mu_k'\) can
  be chosen so that the lengths of the boundary components of the
  \(\check{S}_k\) are uniformly bounded away from zero and infinity.
To complete the sketch of Gromov convergence, we note that the desired
  reparameterizations are given by the Uniformization theorem; the
  \(\mathcal{C}_{\rm loc}^\infty\) convergence away from nodes is given
  by by elliptic regularity (gradient bounds imply \(\mathcal{C}^\infty\)
  bounds); and \(\mathcal{C}^0\) convergence across the nodes follows
  from an application of the Monotonicity lemma.

We have thus established that after passing to a subsequence, still
  denoted with subscripts \(k\),  the curves provided in equation
  (\ref{EQ_some_curves_2}) converge in a Gromov sense to a compact stable
  marked nodal boundary-immersed pseudoholomorphic curve, and hence all that
  remains to show is that
  \(u_k(S_k\setminus \check{S}_k) \subset W\setminus \mathcal{K}^-\). 
However, this follows immediately from the following facts:
\begin{align*}                                                            
  \check{S}_k = \widetilde{S}_k\cup \check{S}_k^{\rm const}
  \end{align*}
\begin{align*}                                                            
  \check{S}_k^{\rm const} = S_k^{\rm const}\cap u_k^{-1}(\widehat{W}^+)
  \end{align*}
\begin{align*}                                                            
  u(\widehat{S}_k\setminus \widetilde{S}_k)\subset W\setminus
  \widetilde{\mathcal{K}}
  \end{align*}
\begin{align*}                                                            
  \mathcal{K}^- \subset\widehat{W}^- \subset {\rm
  Int}(\widetilde{\mathcal{K}}) \subset \widehat{W}^+
  \end{align*}
This completes the proof of Corollary
  \ref{COR_target_local_gromov_compactness}.
\end{proof}

We now aim to prove the main result of this manuscript, namely Theorem
  \ref{THM_exhaustive_gromov_compactness}, which we first restate for the
  reader's convenience.

\setcounter{CurrentSection}{\value{section}}
\setcounter{CurrentTheorem}{\value{theorem}}
\setcounter{section}{\value{CounterSectionEGC}}
\setcounter{theorem}{\value{CounterTheoremEGC}}
\begin{theorem}[exhaustive Gromov compactness]\hfill \\
Let \((\overline{W}, \overline{J}, \bar{g})\) be a smooth almost Hermitian
  manifold, not necessarily compact, and let \((W_k, J_k, g_k)\) be a
  sequence which properly exhausts \((\overline{W}, \overline{J},
  \bar{g})\), in the sense of Definition
  \ref{DEF_properly_exhausting_regions}.
Suppose further that the sequence denoted by 
\begin{align*}                                                            
  \{\mathbf{u}_k\}_{k\in \mathbb{N}}=\{(u_k, S_k, j_k, W_k, J_k, \mu_k,
  D_k)\}_{k\in \mathbb{N}}
  \end{align*}
  is a sequence of proper stable marked nodal pseudoholomorphic curves
  without boundary for which there also exists a sequence of large
  constants \(C_k\) with the property that for each fixed \(k\in
  \mathbb{N}\) the following hold 
  \begin{enumerate}[(C1)]
    \item \label{EN_C1}
    \( \displaystyle{\sup_{\ell \geq k}} \; {\rm Area}_{u_\ell^*g_\ell}
    (\widehat{S}_\ell^k)\leq C_k \)
    \item \label{EN_C2}
    \(\displaystyle{\sup_{\ell \geq k}} \; {\rm
    Genus}(\widehat{S}_\ell^k)\leq C_k \)
    \item \label{EN_C3}
    \(\displaystyle{\sup_{\ell \geq k}}\; \# \big((\mu_\ell \cup
    D_\ell)\cap \widehat{S}_\ell^k\big)\leq C_k \)
    \end{enumerate}
  where \(\widehat{S}_\ell^k:=u_\ell^{-1}(W_k)\). 
\emph{Then} a subsequence converges in an exhaustive Gromov sense to
  \((\bar{u}, \overline{S}, \bar{j}, \overline{W}, \overline{J}, \bar{\mu},
  \overline{D})\) which is a proper stable marked nodal pseudoholomorphic
  curve without boundary.
\end{theorem}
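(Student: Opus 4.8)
The plan is to reduce the theorem to repeated applications of Corollary \ref{COR_target_local_gromov_compactness}, one for each level of the exhaustion, and then to glue the resulting limits together using the direct-limit machinery of Section \ref{SEC_direct_limit_manifolds}. I first record that item (4) of Definition \ref{DEF_properly_exhausting_regions} gives \(\mathrm{cl}(W_\ell)\subset\mathrm{Int}(W_{\ell+1})\), so that \(\mathcal{K}_\ell^-:=\mathrm{cl}(W_\ell)\) and \(\mathcal{K}_\ell^+:=\mathrm{cl}(W_{\ell+1})\) are compact regions with \(\mathcal{K}_\ell^-\subset\mathrm{Int}(\mathcal{K}_\ell^+)\) whose union is \(\overline W\). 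For fixed \(\ell\) and each \(k\) set \(V_k^\ell:=u_k^{-1}(\mathrm{cl}(W_{\ell+2}))\); this is compact since \(u_k\) is proper, its boundary is mapped into \(\partial W_{\ell+2}\) and hence is disjoint from \(\mathcal{K}_\ell^+\subset W_{\ell+2}\), and because \(\mathrm{cl}(W_{\ell+2})\subset W_{\ell+3}\) the hypotheses (C1)--(C3) bound the area, genus, and number of special points of \(V_k^\ell\) by \(C_{\ell+3}\) for every \(k\geq \ell+3\). The domain-restricted curves over \(V_k^\ell\) are stable (a non-constant component stays non-constant by unique continuation, while a constant component of \(S_k\) is closed, since \(u_k\) has no boundary, and so lies entirely inside or outside \(V_k^\ell\)), so they satisfy the hypotheses of Corollary \ref{COR_target_local_gromov_compactness} with \((\mathcal{K}^-,\mathcal{K}^+)=(\mathcal{K}_\ell^-,\mathcal{K}_\ell^+)\).

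Next I would extract a subsequence level by level and diagonalize. At level \(\ell=1\), Corollary \ref{COR_target_local_gromov_compactness} yields a subsequence and trimmed surfaces \(S_k^1\subset V_k^1\) whose restricted curves converge in a Gromov sense and satisfy \(u_k(V_k^1\setminus S_k^1)\subset\overline W\setminus\mathcal{K}_1^-\); in particular \(u_k^{-1}(W_1)\subset u_k^{-1}(\mathcal{K}_1^-)\subset S_k^1\). Passing to successively finer subsequences at \(\ell=2,3,\dots\) and taking the diagonal, I obtain one subsequence (still indexed by \(k\)) along which, for every \(\ell\), the trimmed restricted curves converge in a Gromov sense to a compact stable boundary-immersed limit \((\bar u^{(\ell)},\overline S^{(\ell)},\bar j^{(\ell)},\overline W,\overline J,\bar\mu^{(\ell)},\overline D^{(\ell)})\), with \(u_k^{-1}(W_\ell)\subset S_k^\ell\) for all large \(k\).

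The heart of the proof is to show that consecutive limits are compatible. Since the level-\(\ell\) and level-\((\ell+1)\) limits are produced from the \emph{same} subsequence of curves, and \(\mathrm{cl}(W_{\ell+1})\) contains \(\mathrm{cl}(W_\ell)\) in its interior, uniqueness of the Gromov limit over the overlapping region should yield a holomorphic embedding \(\psi_\ell:\overline S^{(\ell)}\to\overline S^{(\ell+1)}\) that carries \(\partial\overline S^{(\ell)}\) into the interior of \(\overline S^{(\ell+1)}\), matches the marked and nodal data, and satisfies \(\bar u^{(\ell+1)}\circ\psi_\ell=\bar u^{(\ell)}\). Concretely I would compare the reparametrizing diffeomorphisms \(\phi_k^{(\ell)}\) and \(\phi_k^{(\ell+1)}\) supplied by Definition \ref{DEF_gromov_convergence} and pass to the limit in the compositions \((\phi_k^{(\ell+1)})^{-1}\circ\phi_k^{(\ell)}\), defined over the overlap through the inclusion \(S_k^\ell\hookrightarrow S_k^{\ell+1}\), using the \(\mathcal{C}_{\mathrm{loc}}^\infty\) convergence of the hyperbolic metrics and of \(\phi_k^\ast u_k\) away from the nodal circles. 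The additional components, nodes, and genus that appear in passing from level \(\ell\) to level \(\ell+1\)---all controlled, but growing with \(\ell\)---are exactly the part of \(\overline S^{(\ell+1)}\) outside the image of \(\psi_\ell\). I expect this step to be the main obstacle, as it requires reconciling two independent Gromov convergences, with their separate decorations, hyperbolic metrics, and trimmings, on a common overlap.

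Finally, the maps \(\{\psi_\ell\}\) form an embedding diagram \(\overline S^{(1)}\xrightarrow{\psi_1}\overline S^{(2)}\xrightarrow{\psi_2}\cdots\), and I would take its direct limit \((\overline S,\{\iota_\ell\})\) as in Lemma \ref{LEM_exhausting_subsets}. Properties (DL-1) and (DL-2) then assemble the compatible data \(\bar j^{(\ell)},\bar u^{(\ell)},\bar\mu^{(\ell)},\overline D^{(\ell)}\) into a single complex structure \(\bar j\), map \(\bar u\), and marked and nodal sets on \(\overline S\), giving \(\bar{\mathbf u}=(\bar u,\overline S,\bar j,\overline W,\overline J,\bar\mu,\overline D)\). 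This curve is pseudoholomorphic, being a local condition inherited from each level, and has no boundary, since each \(\partial\overline S^{(\ell)}\) is pushed into the interior by \(\psi_\ell\); it is proper because \(u_k^{-1}(W_\ell)\subset S_k^\ell\) forces \(\bar u^{-1}(W_\ell)\subset\iota_\ell(\overline S^{(\ell)})\), so that preimages of the compact regions \(\mathrm{cl}(W_\ell)\) are compact; and it is stable by Definition \ref{DEF_stable_pseudoholomorphic_curve}, using \(\{\iota_\ell(\overline S^{(\ell)})\}\) as the exhausting stable pieces. To conclude I would set \(\overline S^\ell:=\iota_\ell(\overline S^{(\ell)})\) and verify conditions (1)--(5) of Definition \ref{DEF_exhaustive_gromov_convergence}: condition (5) is the level-\(\ell\) Gromov convergence already extracted, and the nesting conditions (1) and (3) are secured by, if necessary, replacing the trimmed surfaces with pullbacks \(\phi_k^{(\ell')}(\overline S^\ell)\) under a sufficiently high-level reparametrization, which are nested because the \(\overline S^\ell\) are and \(\phi_k^{(\ell')}\) is a diffeomorphism, while still containing \(u_k^{-1}(W_\ell)\) for large \(k\).
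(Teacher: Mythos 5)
Your overall strategy coincides with the paper's: apply Corollary \ref{COR_target_local_gromov_compactness} level by level, diagonalize, relate consecutive limits by holomorphic embeddings, and assemble the limit curve with the direct-limit machinery of Section \ref{SEC_direct_limit_manifolds}. However, the step you yourself flag as ``the main obstacle'' --- producing the embeddings \(\psi_\ell\) with \(\bar u^{(\ell+1)}\circ\psi_\ell=\bar u^{(\ell)}\) --- is exactly where the mathematical content of the theorem lies, and your proposal contains no argument for it. ``Uniqueness of the Gromov limit over the overlapping region'' is not an available tool: the two limits arise from different trimmings, different added marked points, and different decorations, and there is no quotable statement giving such local uniqueness. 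What the paper actually does is regard the compositions \((\phi_k^{(\ell+1)})^{-1}\circ\phi_k^{(\ell)}\) as pseudoholomorphic maps whose domain and target almost complex structures converge in \(\mathcal{C}_{loc}^\infty\) away from the special circles, and then prove uniform interior gradient bounds by a bubbling argument: gradient blow-up would yield a non-constant holomorphic map from \(\mathbb{C}\) into an open disk, which is impossible. Elliptic regularity and Arzel\`{a}-Ascoli then give \(\mathcal{C}_{loc}^\infty\) convergence to a holomorphic embedding away from the nodal points, and the removable singularity theorem extends it across them. The ingredients you invoke instead --- \(\mathcal{C}_{loc}^\infty\) convergence of the hyperbolic metrics and of \(\phi_k^\ast u_k\) --- provide no equicontinuity for the transition maps themselves, so ``passing to the limit in the compositions'' is unjustified without the gradient-bound step. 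This is a genuine gap.

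A secondary but real problem occurs at the very start: \(V_k^\ell:=u_k^{-1}({\rm cl}(W_{\ell+2}))\) is compact by properness, but it need not be a smooth compact surface with boundary (that requires \(u_k\pitchfork\partial({\rm cl}(W_{\ell+2}))\)), and its boundary need not avoid \(\mu_k\cup D_k\). Hence the restricted tuples need not be compact marked nodal pseudoholomorphic curves in the sense required by Corollary \ref{COR_target_local_gromov_compactness}, and the corollary cannot be applied to them as stated. The paper removes this obstruction by interposing auxiliary regions \(\widetilde W_k^-,\widetilde W_k^+\) between \({\rm cl}(W_k)\) and \(W_{k+1}\), chosen via Sard's theorem so that every \(u_\ell\) is transverse to their boundaries and no marked or nodal point maps into those boundaries; you need the same device before the level-by-level extraction can even begin. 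The remainder of your outline (diagonal subsequence, direct limit, properness and stability of the limit, verification of the nesting conditions by pushing trimmed limit domains forward through the reparametrizing diffeomorphisms) tracks the paper's proof and is fine modulo these two points.
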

%
\begin{proof}
\setcounter{section}{\value{CurrentSection}}
\setcounter{theorem}{\value{CurrentTheorem}}

We begin by choosing sequences of open sets \(\widetilde{W}_k^-,
  \widetilde{W}_k^+\subset \overline{W}\) with the property that each
  \({\rm cl} (\widetilde{W}_k^-)\) and \({\rm cl} (\widetilde{W}_k^+)\) 
  is a smooth compact manifold with boundary, and 
  \begin{align*}                                                          
    {\rm cl}(W_k) \subset \widetilde{W}_{k+1}^- \subset {\rm
    cl}(\widetilde{W}_{k+1}^-) \subset \widetilde{W}_{k+1}^+ \subset {\rm
    cl}(\widetilde{W}_{k+1}^+) \subset W_{k+1},
    \end{align*}
  and \(u_\ell \pitchfork \partial ({\rm cl}(\widetilde{W}_k^+))\),
  and \(u_\ell(\mu_k \cup D_\ell)\cap \partial ({\rm cl}(\widetilde{W}_k^+))
  = \emptyset \) for all \(k,\ell\in \mathbb{N}\); this is possible by
  Sard's theorem and the fact that the \(\partial ({\rm
  cl}(W_k))\) are smooth manifolds.
We then define the compact manifolds with smooth boundary
  \begin{align*}                                                          
    \widetilde{S}_\ell^k := u_\ell^{-1}({\rm cl}(\widetilde{W}_k^+))
    \subset \widetilde{S}_\ell
    \end{align*}
  and observe that for each fixed \(k\in \mathbb{N}\), and for
  all sufficiently large \(\ell\in \mathbb{N}\), the following sequence of
  tuples
  \begin{equation*}                                                       
    \mathbf{u}_\ell^k:=\big(u_\ell, \widetilde{S}_\ell^k,
    j_\ell, W_k, J_\ell, \mu_\ell \cap
    \widetilde{S}_\ell^k, D_\ell\cap \widetilde{S}_\ell^k\big)
    \end{equation*}
  are compact stable marked nodal pseudoholomorphic curves, and for each
  fixed \(k\) they  have uniformly bounded area, genus, and number of
  marked and nodal points.
Moreover, we let \(\mathcal{K}^-={\rm cl}(W_1)\) and
  \(\mathcal{K}^+ = {\rm cl}(\widetilde{W}_2^-)\), and observe that by
  construction, we have \(\mathcal{K}^-\subset {\rm Int}(\mathcal{K}^+)\),
  and \(u_\ell(\partial \widetilde{S}_\ell^k) \cap \mathcal{K}^+ =
  \emptyset\) for all \(\ell\geq 2\).
Consequently, the sequence \(\{\mathbf{u}_\ell^2\}_{\ell \geq 2}\)
  satisfies the hypotheses of Corollary
  \ref{COR_target_local_gromov_compactness}.

As such, we apply Corollary \ref{COR_target_local_gromov_compactness} to
  the sequence \(\{\mathbf{u}_\ell^2\}_{\ell\geq 2}\)  with
  \(\mathcal{K}^-\) and \(\mathcal{K}^+\) as defined above, 
  and thus we obtain a subsequence we denote by
  \(\big\{\mathbf{u}_{\ell_i^1}^2\big\}_{i\in \mathbb{N}}\), and
  obtain compact manifolds with smooth boundary
  \(\overline{\Sigma}_{\ell_i^1}^1\subset \widetilde{S}_{\ell_i^1}^2\) so
  that the sequence of compact marked nodal
  boundary-immersed pseudoholomorphic curves given by
  \begin{equation*}                                                       
    \big(u_{\ell_i^1}, \overline{\Sigma}_{\ell_i^1}^1, j_{\ell_i^1},
    W_2, J_{\ell_i^1}, \mu_{\ell_i^1}\cap
    \overline{\Sigma}_{\ell_i^1}^1, D_{\ell_i^1} \cap
    \overline{\Sigma}_{\ell_i^1}^1  \big)
    \end{equation*}
  converge in Gromov sense to the limit curve 
  \begin{equation*}                                                       
    \big(\bar{u}^1, \overline{\Sigma}^1, \bar{j}^1, W_2,
    \overline{J}, \mu^1, D^1  \big).
    \end{equation*}
Recall from the properties of Gromov convergence (see Definition
  \ref{DEF_gromov_convergence}) that this gives rise to a sequence of
  diffeomorphisms
  \begin{equation*}                                                       
    \phi_{\ell_i^1}^1:(\overline{\Sigma}^1)^{D^1,r^1} \to
    (\overline{\Sigma}_{\ell_i^1}^1)^{D_{\ell_i^1},r_{\ell_i^1}}
    \end{equation*}
  between the circle-blown up limit domain and the circle-blown up
  sequence domains.
These diffeomorphisms have the property that 
  \begin{equation*}                                                       
    (\phi_{\ell_i^1}^1)^*j_{\ell_i^1}\to \bar{j}^1
    \end{equation*}
  in \( \mathcal{C}_{\rm loc}^{\infty} \) on the compliment of the special
  circles in \((\overline{\Sigma}^1)^{D^1,r^1} \).

Next we consider the (sub)sequence of pseudoholomorphic curves
  \(\{\mathbf{u}_{\ell_i^1}^3\}_{i\in \mathbb{N}}\), and apply
  Corollary \ref{COR_target_local_gromov_compactness} to this sequence with
  \(\mathcal{K}^-={\rm cl}(W_2)\) and \(\mathcal{K}^+={\rm
  cl}(\widetilde{W}_3^-)\) as the associated compact sets.
Thus there exists a further subsequence, denoted with subscripts
  \(\ell_i^2\), and compact manifolds with smooth boundary
  \(\overline{\Sigma}_{\ell_i^2}^2\subset \widetilde{S}_{\ell_i^2}^3\) so that
  the sequence of compact marked nodal boundary-immersed pseudoholomorphic
  curves given by
  \begin{equation*}                                                       
    \big(u_{\ell_i^2}, \overline{\Sigma}_{\ell_i^2}^2, j_{\ell_i^2},
    W_3, J_{\ell_i^2}, \mu_{\ell_i^2}\cap
    \overline{\Sigma}_{\ell_i^2}^2, D_{\ell_i^2} \cap
    \overline{\Sigma}_{\ell_i^2}^2  \big)
    \end{equation*}
  converge in Gromov sense to the limit curve 
  \begin{equation*}                                                       
    \big(\bar{u}^2, \overline{\Sigma}^2, \bar{j}^2, W_3,
    \overline{J}, \mu^2, D^2  \big)
    \end{equation*}

Again we have diffeomorphisms
  \begin{equation*}                                                       
    \phi_{\ell_i^2}^2:(\overline{\Sigma}^2)^{D^2,r^2} \to
    (\overline{\Sigma}_{\ell_i^2}^2)^{D_{\ell_i^2},r_{\ell_i^2}}
    \end{equation*}
  and on the complement of the special circles we have \(\mathcal{C}_{\rm
  loc}^\infty\) convergence of the almost complex structures
  \begin{equation*}                                                       
    (\phi_{\ell_i^2}^2)^*j_{\ell_i^2}\to \bar{j}^2.
    \end{equation*}

Defining \(\Sigma^1:={\rm Int}(\overline{\Sigma}^1)\) and \(\Sigma^2:={\rm
  Int}(\overline{\Sigma}^2)\), our goal at present then becomes to construct
  a holomorphic embedding
  \begin{equation*}                                                       
    \check{\psi}_1 : \big( \Sigma^1, \bar{j}^1\big)\to \big(\Sigma^2,
    \bar{j}^2\big)
    \end{equation*}
  which sends marked points to marked points and nodal points to nodal
  points, and for which \(\bar{u}^1 = \bar{u}^2\circ \check{\psi}_1\).
We accomplish this by considering the sequence of maps \(
  (\phi_{\ell_i^2}^2)^{-1} \circ\phi_{\ell_i^2}^1 :\overline{\Sigma}^1\to
  \overline{\Sigma}^2 \) which by definition are holomorphic with respect to
  domain (almost) complex structure \((\phi_{\ell_i^2}^1)^*j_{\ell_i^2}\)
  and target (almost) complex structure
  \((\phi_{\ell_i^2}^2)^*j_{\ell_i^2}\).

Because the domain and target (almost) complex structures converge
  smoothly away from the special circles, we can regard this as a sequence
  of pseudoholomorphic maps.
We claim the maps must have uniformly bounded gradient on the interior
  away from special circles.
Or in other words, we claim that if the gradient blows up along a sequence
  of points, those points must converge to either a special circle or the
  boundary \(\partial (\overline{\Sigma}^1)^{D^1, r^1}\).
Indeed, if this were not true, one could then employ bubbling analysis to
  construct a non-constant holomorphic map from \(\mathbb{C}\) into the open
  disk in \(\mathbb{C}\) which is impossible.

With interior gradient bounds established, we then note that by elliptic
  regularity, we have uniformly bounded derivatives on the interior away
  from special circles, and a subsequence then converges to the holomorphic
  embedding \(\check{\psi}_1:(\Sigma^1\setminus D^1,\bar{j}^1)\to (\Sigma^2,
  \bar{j}^2)\).
An application of the removable singularity theorem then extends this to
  the desired holomorphic embedding \(\check{\psi}_1:(\Sigma^1,\bar{j}^1)\to
  (\Sigma^2, \bar{j}^2)\).

We now claim that \(\bar{u}^2\circ \check{\psi}_1 = \bar{u}^1\).
Indeed, this follows essentially from the smooth interior convergence of
  the maps \( (\phi_{\ell_i^2}^2)^{-1} \circ\phi_{\ell_i^2}^1\to
  \check{\psi}_1\) together with the fact that we have \(\mathcal{C}_{\rm
  loc}^\infty\) convergence of the maps \(\bar{u}_{\ell_i^2}\circ
  \phi_{\ell_i^2}^2\to \bar{u}^2\).

At this point we collect our results, and we shall see that that the proof
  is nearly complete.
As a first step, we remove a thin annular open neighborhood of the
  boundary of \(\overline{\Sigma}^1\) to obtain the compact manifold with
  smooth boundary \(\check{\Sigma}^1\subset \overline{\Sigma}^1\); we
  similarly define \(\check{\Sigma}^2\subset \overline{\Sigma}^2\).
We also note that we have smooth (almost) complex manifolds without
  boundary given by \((\Sigma^1, \bar{j}^1)\) and \((\Sigma^2, \bar{j}^2)\)
  and a holomorphic map \(\check{\psi}_1: \Sigma^1\to \Sigma^2\) between
  them which also sends marked points to marked points and nodal points to
  nodal points, and which satisfies \(\bar{u}^2\circ \check{\psi}_1 =
  \bar{u}^1\).
We also found a subsequence \(i\mapsto \ell_i^1\) of \(i\mapsto i\), and a
  subsequence  \(i\mapsto \ell_i^2\) of \(i\mapsto \ell_i^1\) for which  we
  have Gromov convergence
  \begin{equation*}                                                       
    \big(u_{\ell_i^k}, \overline{\Sigma}_{\ell_i^k}^k, j_{\ell_i^k},
    \overline{W}, J_{\ell_i^k}, \mu_{\ell_i^k}\cap
    \overline{\Sigma}_{\ell_i^k}^k, D_{\ell_i^k} \cap
    \overline{\Sigma}_{\ell_i^k}^k  \big)
    \to  \big(\bar{u}^k, \overline{\Sigma}^k, \bar{j}^k, \overline{W},
    \overline{J}, \mu^k, D^k  \big),
    \end{equation*}
  for \(k\in \{1, 2\}\), and where all curves in the sequence and limit
  are compact marked nodal boundary-immersed pseudoholomorphic curves.
The final, and most important observation to make is that the
  construction to obtain these results is iterative, and hence for each
  \(k\in \mathbb{N}\) we can construct \(\Sigma^k\), \(\check{\psi}_k\),
  \(\check{\Sigma}^k\), \(i\mapsto \ell_i^k\) etc, with all the associated
  properties.
Observe that the holomorphic maps \(\check{\psi}_k:(\Sigma^k, \bar{j}^k)
  \to (\Sigma^{k+1}, \bar{j}^{k+1})\) send marked points to marked
  points and nodal points to nodal points, and hence give rise to an
  embedding diagram in the sense of Section
  \ref{SEC_direct_limit_manifolds}.
Consequently there exists a direct limit manifold \(\overline{S}\) and 
  holomorphic embeddings \(\check{\iota}_k:
  \Sigma^k\to \overline{S}\) with marked points defined by \(\bar{\mu} :=
  \bigcup_{k\in \mathbb{N}} \check{\iota}_k(\bar{\mu}^k)\)  and nodal points
  defined by \(\bar{\mu} := \bigcup_{k\in \mathbb{N}}
  \check{\iota}_k(\overline{D}^k)\).
This will be our limit marked nodal Riemann surface \((\overline{S},
  \bar{j}, \bar{\mu}, \overline{D})\) without boundary.  
We also note that \(\bar{u}^{k+1}\circ \check{\psi}_k = \bar{u}^k\), and
  hence by property (DL-1) in Section \ref{SEC_direct_limit_manifolds} we
  see that the \(\{ \bar{u}^k\}_{k\in \mathbb{N}}\) induce a map \(\bar{u}:
  \check{\Sigma}\to \overline{W}\) for which the tuple
  \begin{equation*}                                                       
    \big(\bar{u}, \overline{S}, \bar{j}, \overline{W}, \overline{J},
    \bar{\mu}, \overline{D}\big)
    \end{equation*}
  is a proper marked nodal pseudoholomorphic curve.  

At this point we have constructed our limit curve, however it still
  remains to show that (after passing to a diagonal subsequence) we have the
  desired exhaustive Gromov convergence.
To that end, we next define the diagonal subsequence \(i\mapsto
  \delta_i:=\ell_i^i\).
Recall that our iterative construction yielded the sequence of smooth
  two-dimensional manifolds with boundary
  \(\check{\Sigma}^k\subset\overline{\Sigma}^k\subset S_k\).
We now define
  \(\overline{S}^{\delta_k}:=\check{\iota}_{\delta_k}
  (\check{\Sigma}^{\delta_k})\subset \overline{S}\) for each \(k\in
  \mathbb{N}\).
We next aim to define \(S_{\delta_k}^{\delta_\ell} \subset S_{\delta_k}\)
  for all \(k, \ell\in \mathbb{N}\) with \(k\geq \ell\).
The precise definition is given as
  \begin{equation*}                                                       
    S_{\delta_k}^{\delta_\ell}:={\rm
    cl}\Big(\phi_{\delta_k}^{\delta_\ell}\big(
    \check{\iota}_{\delta_\ell}^{-1}(\overline{S}^{\delta_\ell}\setminus
    \overline{D})\big)\Big),
    \end{equation*}
  where the \(\phi_\cdot^\cdot\) are the diffeomorphisms guaranteed by Gromov
  convergence from the blown up limit Riemann surface to the blown up
  approximating Riemann surfaces:
  \begin{align*}                                                          
    \phi_{\ell_i^1}^1&:(\overline{\Sigma}^1)^{D^1,r^1} \to
    (\overline{\Sigma}_{\ell_i^1}^1)^{D_{\ell_i^1},r_{\ell_i^1}}
    \\
    \phi_{\ell_i^2}^2&:(\overline{\Sigma}^2)^{D^2,r^2} \to
    (\overline{\Sigma}_{\ell_i^2}^2)^{D_{\ell_i^2},r_{\ell_i^2}}
    \\
    \phi_{\ell_i^3}^3&:(\overline{\Sigma}^3)^{D^3,r^3} \to
    (\overline{\Sigma}_{\ell_i^3}^3)^{D_{\ell_i^3},r_{\ell_i^3}}
    \\
    &\; \vdots
    \end{align*}

For the sake of clarity, we also provide a more geometric description of
  the definition of the \(S_{\delta_k}^{\delta_\ell}\).  
Start with the compact manifold with boundary
  \(\overline{S}^{\delta_\ell}\subset \overline{S}\).
Then remove the nodal points to obtain
  \(\overline{S}^{\delta_\ell}\setminus \overline{D}\).
This lies in the image of the embedding \(\check{\iota}_{\delta_\ell}:
  \check{\Sigma}^{\delta_\ell}\to \overline{S}\);
  recall that the \(\check{\Sigma}^{\delta_\ell}\) (which satisfy
  \(\check{\Sigma}^{\delta_\ell}\subset
  \overline{\Sigma}^{\delta_\ell}\subset S_{\delta_\ell} \)) form the
  embedding sequence which has direct limit \(\overline{S}\),
  and the \(\check{\iota}_{\delta_\ell}:\check{\Sigma}_{\delta_\ell}\to
  \overline{S}\) are the associated embeddings (which can be thought of as
  inclusions since their images exhaust \(\overline{S}\)).
Consequently, we pull back via
  \(\check{\iota}_{\delta_\ell}\) to obtain a subset of
  \(\overline{\Sigma}^{\delta_\ell}\).
Because we have removed the nodal points, we may identify
  \(\overline{\Sigma}^{\delta_\ell}\setminus D^{\delta_\ell}\) with the
  circle blown-up manifold with special circles removed:
  \((\overline{\Sigma}^{\delta_\ell})^{D^{\delta_\ell},
  r^{\delta_\ell}}\setminus \cup_i \Gamma_i\).
Consequently, we may then map our set via the
  \(\phi_{\delta_k}^{\delta_\ell}\) to obtain
  \begin{equation*}                                                       
    \phi_{\delta_k}^{\delta_\ell}\big(
    \check{\iota}_{\delta_\ell}^{-1}(\overline{S}^{\delta_\ell}\setminus
    \overline{D})\big)
\subset \overline{\Sigma}_{\delta_k}^{\delta_\ell} \setminus D_{\delta_k}  
\subset \widetilde{S}_{\delta_k}
\subset S_{\delta_k}
    \end{equation*}
  which would be the compact manifolds with smooth boundaries that we
  seek, except that these sets are missing the images of the special
  circles.
Thus after taking the closure of these sets, we obtain the desired compact
  sets, which we have denoted \(S_{\delta_k}^{\delta_\ell}\).

Thus, we have passed to a subsequence \(\{\mathbf{u}_{\delta_i}\}_{i\in
  \mathbb{N}}\), and constructed a proper marked nodal pseudoholomorphic
  curve \((\bar{u}, \overline{S}, \bar{j}, \overline{W}, \overline{J},
  \bar{\mu}, \overline{D})\), and found smooth compact two-dimensional
  manifolds with boundary \(\overline{S}^{\delta_\ell}\subset \overline{S}
  \) and \(S_{\delta_k}^{\delta_\ell} \subset S_{\delta_k}\) which we now
  claim have the following properties by construction.
\begin{enumerate}                                                         
  \item 
  \(\overline{S}^\ell\subset \overline{S}^{\ell+1}\setminus \partial
  \overline{S}^{\ell+1}\) for all \(\ell\in \mathbb{N}\)
  \item 
  \(\overline{S} = \bigcup_{\ell \in \mathbb{N}} \overline{S}^\ell\) 
  \item
  for each fixed \(k\in \mathbb{N}\) and each \(0\leq \ell \leq k-1\) we
  have \(S_k^\ell\subset S_k^{\ell+1}\setminus \partial S_k^{\ell+1}\)
  \item
  for each \(k\geq  \ell\in \mathbb{N}\) we have
  \begin{equation*}                                                       
    u_{k} ^{-1}(W_{\ell})\subset S_k^{\ell},
    \end{equation*}
  \item 
  For each fixed \(\ell\in \mathbb{N}\), the sequence 
  \begin{equation*}                                                       
    \big\{
    \big(u_k,\; 
    S_k^{\ell},\; 
    j_k,\;  
    \overline{W}, \;
    J_k, \;
    S_k^{\ell}\cap\mu_k,\; 
    S_k^{\ell}\cap D_k\big)
    \big\}_{k \geq \ell}
    \end{equation*}
  is a sequence of compact marked nodal stable boundary-immersed
  pseudoholomorphic curves which converges in a Gromov sense to the
  proper marked nodal stable boundary-immersed pseudoholomorphic curve
  \begin{equation*}                                                       
    \big(\bar{u},\; 
    \overline{S}^{\ell},\; 
    \bar{j},\;  
    \overline{W}, \;
    \overline{J}, \;
    \overline{S}^{\ell}\cap\bar{\mu},\; 
    \overline{S}^{\ell}\cap \overline{D}\big).
    \end{equation*}
  \end{enumerate}

The first two properties essentially follow from the fact that \((W_k,
  J_k, g_k)\) is a sequence of properly exhausting regions for
  \((\overline{W}, \overline{J}, \bar{g})\).
To see this, first observe that whenever these two properties hold, they
  will also hold for any subsequence of slightly trimmed compact sets.
Second, recall that \(\widehat{S}_\ell^k =u_\ell^{-1}(W_k)\), and then by
  slightly target-trimming our curves and passing to a subsequence (here
  still denoted with subscripts \(\ell\)) we found compact manifolds with
  boundary \(\overline{\Sigma}_\ell^k\subset \widehat{S}_\ell^{k+1}\) on
  which the subsequence \(\bar{u}_\ell\) still converged.
The domains of these limit curves we denoted \(\overline{\Sigma}^k\);
  we denoted their interiors by \(\Sigma^k\), and we used these and the
  Gromov convergence of the curves to construct holomorphic maps
  \(\check{\psi}_k: \Sigma^k \to \Sigma^{k+1}\) which resulted in an
  associated limit Riemann surface \(\overline{S}\).
The compact manifolds with boundary \(\overline{S}^k\subset \overline{S}\)
  were then obtain by trimming the \(\check{\iota}_k(\Sigma^k)\subset
  \overline{S}\) slightly further.
Because this procedure only involved passing to subsequences and making
  small trimmings, the first two properties follow immediately from the
  definition of \(\overline{S}\).

We establish the third property in a moment, but at present we work on the
  fourth property. 
Indeed, recall that  \( u_k^{-1}(W_\ell) = \widehat{S}_k^\ell\), so that
  we need to verify that \(\widehat{S}_k^\ell\subset S_k^{\ell}\).
Also note that due to the properly exhausting nature of the \(W_k\) it
  follows that \(\widehat{S}_k^\ell \subset \widehat{S}_k^{\ell+1}\) in such
  a way that for any sufficiently small trimming \(\widetilde{\Sigma}\) of
  \(\widehat{S}_k^\ell\), we have \(\widehat{S}_k^\ell \subset
  \widetilde{\Sigma} \subset \widehat{S}_k^{\ell+1}\) .
However, recall that \(\overline{\Sigma}_k^\ell\) was obtained as a small
  trimming of  \(\widehat{S}_k^{\ell+1}\), and \(S_k^\ell\) was obtained as
  a small trimming of \(\overline{\Sigma}_k^\ell\), from which we see that
  \begin{equation*}                                                       
    \widehat{S}_k^{\ell} \subset  S_k^\ell\subset
    \overline{\Sigma}_k^\ell\subset\widehat{S}_k^{\ell+1}
    \end{equation*}
  and hence we indeed have \(\widehat{S}_k^{\ell} \subset  S_k^\ell\), as
  desired.
Note however that extending this string of containments a bit further, we
  have
  \begin{equation*}                                                       
   S_k^{\ell-1}\subset \overline{\Sigma}_k^{\ell-1}\subset
   \widehat{S}_k^{\ell} \subset  S_k^\ell\subset
   \overline{\Sigma}_k^\ell\subset\widehat{S}_k^{\ell+1},
   \end{equation*}
  which establishes the third property.
Finally, with the \(\overline{S}^k\) obtained as slightly trimmed versions
  of the \(\overline{\Sigma}^k\supset \overline{S}^k\), together with the
  result that we obtained (after passing to a subsequence, which we still
  denote with subscripts \(\ell\)) convergence of the
  \(u_\ell:\overline{\Sigma}_k^\ell\to \overline{W}\) to
  \(\bar{u}^k:\overline{\Sigma}^k\to \overline{W}\), we were able to trim in
  the limit domains \(\overline{\Sigma}^k\), and we were able to use the
  diffeomorphisms \(\phi_k^\ell\) guaranteed by the usual Gromov convergence
  to push forward the \(\overline{S}^\ell\) into the \(S_k\); the images of
  these sets we defined to be \(S_k^\ell\), and they had the property that by
  construction maps \(u_k:S_k^\ell\to \overline{W}\) converged in a
  Gromov sense to \(\bar{u}^\ell:\overline{S}^\ell\to \overline{W}\).
This then establishes the fifth property.

\end{proof}

\appendix

%
\section{Formula for arithmetic genus}
  \label{SEC_formulat_arithmetic_genus}
Here we provide the more standard formula based definition of arithmetic
  genus, and we show that this is equivalent to the notion provided in
  Definition \ref{DEF_arithmetic_genus}.
Before proceeding to prove that result, we note that we will employ the
  following notation.
For any topological space \(X\), we let \(\pi_0(X)\) denote the set of
  connected components of \(X\), and we let \(\#\pi_0(X)\) denote the number
  of connected components of \(X\).
Additionally we recall the discussion following Definition 
  \ref{DEF_nodal_riemann_surface} in which we defined a topological space
  \(|S|\) associated to a nodal Riemann surface \((S, j, D)\) via
  identifying points in each nodal pair: \(\underline{d}_i\sim
  \overline{d}_i\).
In this way, we will abuse language a bit by saying that \(\Sigma\subset
  S\) is a connected component of \(|S|\) whenever \(|\Sigma|\) is a
  connected component of \(|S|\).

\begin{lemma}[formula for arithmetic genus]\hfill\\
  \label{LEM_formula_for_arithmetic_genus}
Let \((S, j, D)\) be a compact marked nodal Riemann surface, possibly with
  boundary.
Then a formula for the arithmetic genus of \((S, j, D)\) is given by
  \begin{equation*}                                                       
    {\rm Genus}_{arith}(\mathbf{u})=\#\pi_0(|S|)-\#\pi_0(S)
    +\Big(\sum_{k=1}^{\# \pi_0(S)}
    g_k\Big) +{\textstyle \frac{1}{2}}\# D.
    \end{equation*}
\end{lemma}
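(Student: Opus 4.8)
The plan is to reduce the computation of ${\rm Genus}(S^{D,r})$ to an Euler characteristic count. Since $(S,j)$ is a Riemann surface it is orientable, and because the decorations $\bar r_\nu,\underline r_\nu$ are orientation reversing on the circles $\Gamma_\nu$, the glued surface $S^{D,r}$ is again a compact \emph{orientable} surface (possibly with boundary). For any such $T$ with connected components $T_1,\dots,T_q$, where $T_i$ has $b_i$ boundary circles, capping each boundary circle with a disk raises the Euler characteristic by $1$ and produces a closed orientable surface, so ${\rm Genus}(T_i)=1-\tfrac12(\chi(T_i)+b_i)$ by Definition~\ref{DEF_genus}. Summing over components gives the master identity
\begin{equation*}
  {\rm Genus}(T) = \#\pi_0(T) - \tfrac12\big(\chi(T) + b(T)\big),
\end{equation*}
where $b(T)$ denotes the total number of boundary circles of $T$. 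Everything then reduces to computing $\#\pi_0(S^{D,r})$, $\chi(S^{D,r})$, and $b(S^{D,r})$.

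First I would compute $\chi(S^{D,r})$. The oriented blow-up replaces each nodal point by an exceptional circle; since $S^D$ at a single point is diffeomorphic to $S$ with a small open disk removed, and $\chi(S)=\chi(S\setminus U)+\chi(\mathrm{cl}(U))-\chi(\partial U)=\chi(S\setminus U)+1$, each blow-up lowers the Euler characteristic by exactly $1$. Blowing up all $\#D$ nodal points therefore yields $\chi(S^D)=\chi(S)-\#D$. Next, $S^{D,r}$ is formed from $S^D$ by gluing the $\tfrac12\#D$ pairs $(\overline\Gamma_\nu,\underline\Gamma_\nu)$; each gluing identifies two disjoint circles into a single circle, and by inclusion--exclusion for Euler characteristic (subtract $\chi$ of the two glued circles, add back $\chi$ of the resulting circle, all of which vanish) this leaves $\chi$ unchanged. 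Hence $\chi(S^{D,r})=\chi(S)-\#D$. Writing $p:=\#\pi_0(S)$ and using $\chi(S)=\sum_{k=1}^{p}(2-2g_k-b_k)=2p-2\sum_k g_k-b(S)$, I obtain $\chi(S^{D,r})=2p-2\sum_k g_k-b(S)-\#D$.

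It remains to identify the two remaining quantities. Because $D\subset S\setminus\partial S$, every exceptional circle is interior and becomes a two-sided embedded circle after gluing, so $\partial S^{D,r}=\partial S$ and thus $b(S^{D,r})=b(S)$. Likewise, two components of $S$ become connected in $S^{D,r}$ precisely when they are joined by a chain of glued nodal circles, which is exactly the condition for the corresponding points of $|S|$ to lie in one component; hence $\#\pi_0(S^{D,r})=\#\pi_0(|S|)$. Substituting these three facts into the master identity gives
\begin{equation*}
  {\rm Genus}(S^{D,r}) = \#\pi_0(|S|) - \tfrac12\big(2p - 2{\textstyle\sum_k} g_k - \#D\big) = \#\pi_0(|S|) - p + {\textstyle\sum_k} g_k + \tfrac12\#D,
\end{equation*}
which, since $p=\#\pi_0(S)$ and ${\rm Genus}(S^{D,r})={\rm Genus}_{arith}$ by Definition~\ref{DEF_arithmetic_genus}, is the asserted formula.

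The main obstacle I anticipate is making the blow-up Euler characteristic accounting rigorous together with the orientability bookkeeping: one must verify that the oriented blow-up drops $\chi$ by one \emph{per nodal point} (so by $\#D$, not per nodal pair), that gluing along the decorations produces a smooth orientable surface so that Definition~\ref{DEF_genus} applies component by component, and that the component count of $S^{D,r}$ agrees with that of $|S|$ exactly. Each of these points is elementary, but must be stated carefully to avoid off-by-one and factor-of-two slips; once they are in place, the algebra collapses immediately to the stated formula.
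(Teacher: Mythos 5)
Your proof is correct and follows essentially the same route as the paper's: both compute $\chi(S^{D,r})$ by tracking the Euler characteristic through the oriented blow-up (a drop of $1$ per nodal point) and the circle gluings (no change), and then convert to genus via the relation $\chi = 2\#\pi_0 - 2\,\mathrm{Genus} - \#\pi_0(\partial\cdot)$ applied to $S^{D,r}$, whose component count is $\#\pi_0(|S|)$ and whose boundary is $\partial S$. The only difference is organizational --- the paper runs the computation component by component of $S$ while you work globally and make the orientability and component/boundary identifications explicit, points the paper leaves implicit.
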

%
\begin{proof}
For notational convenience, we define
  \begin{equation*}                                                       
    m= \#\pi_0(|S|), \qquad n= \#\pi_0(S)\qquad\text{and}\qquad b =
    \#\pi_0(\partial S).
    \end{equation*}
We begin by denoting the connected components of \(S\) by \(S_k\), so that
  \(\cup_{k=1}^n S_k= S\).
Next, we let \(D_k = S_k\cap D\), and we let \(S_k^{D_k}\) denote the
  circle compactification of \(S_k\setminus D_k\), and we let \(S^{D, r}\)
  denote the surface obtained by gluing the \(S_k^{D_k}\) along pairs of
  compactification circles associated to nodal pairs.
Then, letting \(G_a\) denote the arithmetic genus of \(\mathbf{u}\), and
  letting \(g_k\) denote the genus of \(S_k\), we have
  \begin{align*}                                                          
    2m - 2 G_a - b &=   2m - 2{\rm Genus}(S^{D, r}) - b
    \\
    &= \chi(S^{D, r})
    \\
    &=\sum_{k=1}^n \chi(S_k^{D_k})
    \\
    &=\sum_{k=1}^n \big(\chi(S_k) - \# D_k\big)
    \\
    &=\sum_{k=1}^n \big(2 - 2g_k -\#\pi_0 (\partial S_k) - \# D_k\big)
    \\
    &=2n -2\Big(\sum_{k=1}^n g_k\Big) -b -\#D.
    \end{align*}
Solving for \(G_a\), the desired result is immediate.
\end{proof}

\bibliography{bibliography}{}
\bibliographystyle{plain}
\end{document}